\definecolor{dark-red}{rgb}{0.4,0.15,0.15}
\definecolor{dark-blue}{rgb}{0.15,0.15,0.4}
\definecolor{medium-blue}{rgb}{0,0,0.5}
\newcommand*{\defeq}{\mathrel{\rlap{%
                     \raisebox{0.3ex}{$\m@th\cdot$}}%
                     \raisebox{-0.3ex}{$\m@th\cdot$}}%
                     =}
\newcommand*{\eqdef}{\mathrel{=\llap{%
                     \raisebox{0.3ex}{$\m@th\cdot$}}%
                     \llap{\raisebox{-0.3ex}{$\m@th\cdot$}}}%
                     }
\newcommand\A{\mathbb{A}}
\renewcommand\aa{\mathfrak{a}}
\newcommand\bb{\mathfrak{b}}
\renewcommand\C{\mathbb{C}}
\newcommand\cc{\mathfrak{c}}
\newcommand\FF{\mathcal{F}}
\newcommand\GL{\mathrm{GL}}
\newcommand\Hb{\mathbb{H}}
\newcommand\II{\mathcal{I}}
\newcommand\Q{\mathbb{Q}}
\newcommand\R{\mathbb{R}}
\newcommand\SL{\mathrm{SL}}
\newcommand\Z{\mathbb{Z}}
\def\e{\varepsilon}
\DeclareMathOperator{\sgn}{sgn}
\numberwithin{equation}{section}
\newtheorem{theorem}[equation]{Theorem}
\newtheorem{corollary}[equation]{Corollary}
\newtheorem{lemma}[equation]{Lemma}
\newtheorem{proposition}[equation]{Proposition}
\theoremstyle{remark}
\newtheorem{remark}[equation]{Remark}
\begin{document}

\title{Effective Lower Bounds for \texorpdfstring{$L(1,\chi)$}{L(1,\83\307)} via Eisenstein Series}

\author{Peter Humphries}

%\date{\today}

\address{Department of Mathematics, Princeton University, Princeton, New Jersey 08544, USA}

\email{\href{mailto:peterch@math.princeton.edu}{peterch@math.princeton.edu}}

\keywords{Dirichlet $L$-function, lower bounds, Eisenstein series}

\subjclass[2010]{11M20 (primary); 11M36 (secondary).}

\begin{abstract}
We give effective lower bounds for $L(1,\chi)$ via Eisenstein series on $\Gamma_0(q) \backslash \Hb$. The proof uses the Maa\ss{}--Selberg relation for truncated Eisenstein series and sieve theory in the form of the Brun--Titchmarsh inequality. The method follows closely the work of Sarnak in using Eisenstein series to find effective lower bounds for $\zeta(1 + it)$.
\end{abstract}

\maketitle

%\tableofcontents

\section{Introduction}

Let $q$ be a positive integer, let $\chi$ be a Dirichlet character modulo $q$, and let
\[L(s,\chi) \defeq \sum_{n = 1}^{\infty} \frac{\chi(n)}{n^s}\]
be the associated Dirichlet $L$-function, which converges absolutely for $\Re(s) > 1$ and extends holomorphically to the entire complex plane except when $\chi$ is principal, in which case there is a simple pole at $s = 1$. It is well known that Dirichlet's theorem on the infinitude of primes in arithmetic progressions is equivalent to showing that $L(1,\chi) \neq 0$ for every Dirichlet character $\chi$ modulo $q$. Of further interest is obtaining lower bounds for $L(1,\chi)$ in terms of $q$. By complex analytic means \cite[Theorems 11.4 and 11.11]{Montgomery}, one can show that if $\chi$ is complex, then
\[|L(1,\chi)| \gg \frac{1}{\log q},\]
while
\[L(1,\chi) \gg \frac{1}{\sqrt{q}}\]
if $\chi$ is quadratic. In both cases, the implicit constants are effective. For quadratic characters, the Landau--Siegel theorem states that
\[L(1,\chi) \gg_{\varepsilon} q^{-\varepsilon}\]
for all $\varepsilon > 0$ \cite[Theorem 11.14]{Montgomery}, though this estimate is ineffective due to the possible existence of a Landau--Siegel zero of $L(s,\chi)$.

In this article, we give a novel proof of effective lower bounds for $L(1,\chi)$, albeit in slightly weaker forms.

\begin{theorem}\label{mainthm}
Let $q \geq 2$ be a positive integer, and let $\chi$ be a primitive character modulo $q$. If $\chi$ is complex, then
\[|L(1,\chi)| \gg \frac{1}{(\log q)^3},\]
while
\[L(1,\chi) \gg \frac{1}{\sqrt{q} (\log q)^2}\]
if $\chi$ is quadratic. In both cases, the implicit constants are effective.
\end{theorem}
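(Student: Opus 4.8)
The plan is to locate $L(1,\chi)$ inside the constant term of a Dirichlet-twisted Eisenstein series on $\Gamma_0(q)\backslash\Hb$ and then to exploit the nonnegativity of a truncated $L^2$-norm via the Maaß--Selberg relation, exactly as Sarnak does for $\zeta(1+it)$. Concretely, I would attach to the primitive character $\chi$ the Eisenstein series
\[E_\chi(z,s) = \sum_{\gamma \in \Gamma_\infty \backslash \Gamma_0(q)} \overline{\chi}(d_\gamma)\,\Im(\gamma z)^s, \qquad \Re(s) > 1,\]
where $d_\gamma$ is the lower-right entry of $\gamma$, and compute its Fourier expansion in $\Re(z)$. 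The zeroth coefficient contributes a scattering term $\varphi(s)\,y^{1-s}$ whose arithmetic part is, up to explicit Gamma factors and a power of $q$, of the shape $\tau(\chi)\,q^{-2s}\,L(2s-1,\overline\chi)/L(2s,\chi)$, with $\tau(\chi)$ the Gauss sum and $|\tau(\chi)| = \sqrt q$. Two features deserve emphasis: that $L(2s,\chi)$ sits in the \emph{denominator}, so that the central point $s = \tfrac12$ feeds in $L(1,\chi)$, and that the meromorphic continuation and the functional equation $s \leftrightarrow 1-s$ (interchanging $\chi$ and $\overline\chi$) are inherited from those of $L(s,\chi)$; in particular $E_\chi$ is regular at $s=1$ precisely because $\chi$ is nonprincipal.

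Truncating at height $T$ produces a genuine $L^2$ function $\Lambda^T E_\chi$, and the Maaß--Selberg relation expresses $\|\Lambda^T E_\chi(\cdot,s)\|^2$ explicitly in terms of $\varphi$, $\varphi'$, $T$, and $s$. Reading this identity with the left-hand side $\ge 0$ converts analytic information about $\varphi$—hence about the ratio of $L$-values above—into an inequality. I would specialise $s$ to a point of the form $\tfrac12 + (\log q)^{-1}$ and choose $T$ as a small fixed power of $q$, arranging that every ``bounded'' contribution in the relation is genuinely $O(\log q)$; the term carrying $1/L(2s,\chi) = 1/L(1 + O(1/\log q),\chi)$ then cannot be too large without violating positivity, which is what forces $L(1,\chi)$ to be bounded below.

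The passage from $L(1 + (\log q)^{-1},\chi)$ back to $L(1,\chi)$, together with the estimation of the Fourier and geometric contributions to $\|\Lambda^T E_\chi\|^2$ attached to the chosen $T$, requires effective upper bounds for character sums and for counts of elements of $\Gamma_0(q)$ with bounded entries; this is precisely where the Brun--Titchmarsh inequality enters, as an unconditional and effective substitute for prime counting in arithmetic progressions. Optimising the free parameters $T$ and $s - \tfrac12$ then yields the stated bounds: the factor $\sqrt q$ in the quadratic case comes directly from the normalisation $|\tau(\chi)| = \sqrt q$ and the $q^{1/2-2s}$ in $\varphi$, whereas for complex $\chi$ one works with the pair $\{\chi,\overline\chi\}$ simultaneously—so that $L(1,\chi)L(1,\overline\chi) = |L(1,\chi)|^2$ appears and the $\sqrt q$ is avoided—at the cost of one further power of $\log q$.

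The main obstacle I anticipate is making the whole argument uniform and effective in $q$. In decreasing order of difficulty: (i) pinning down the exact Gamma- and $q$-power normalisation of the scattering term, since the final exponents of $q$ and of $\log q$ hinge on it; (ii) handling the complex case, where the scattering is genuinely a $2\times 2$ matrix coupling $\chi$ and $\overline\chi$ rather than a scalar, so that the Maaß--Selberg relation and its positivity must be taken in matrix form; and (iii) controlling the truncation so that all error terms are $O(\log q)$ with effective implied constants—the step that leans on Brun--Titchmarsh and that ultimately fixes the powers of $\log q$ appearing in the statement.
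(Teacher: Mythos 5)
Your setup (the Eisenstein series $E_{\infty}(z,s,\chi)$ on $\Gamma_0(q) \backslash \Hb$, Arthur truncation, Maa\ss{}--Selberg) matches the paper's, but the engine you propose --- positivity of $\|\Lambda^T E\|^2$ in the Maa\ss{}--Selberg relation --- cannot produce a lower bound for $|L(1,\chi)|$, and this is a genuine gap rather than a technicality. The reason is that the only nonvanishing scattering entry is unimodular on the critical line: the paper computes $\varphi_{\infty 1}(s,\chi) = \frac{\overline{\tau(\chi)}}{q^s} \frac{\Lambda(2 - 2s,\chi)}{\Lambda(2s,\overline{\chi})}$, whence $\left|\varphi_{\infty 1}\left(\frac{1}{2} + it,\chi\right)\right| = 1$ (also $\varphi_{\infty\infty} \equiv 0$ for $q \geq 2$, so there is no matrix-positivity subtlety of the kind you anticipate in your point (ii)). Consequently the Maa\ss{}--Selberg relation at the centre reads $\|\Lambda^T E_{\infty}(\cdot,\tfrac12,\chi)\|^2 = 2\log T - \Re\left(\frac{\varphi_{\infty 1}'}{\varphi_{\infty 1}}(\tfrac12,\chi)\right)$, and the right-hand side sees $L$ only through $\Re\left(\frac{L'}{L}(1,\chi)\right)$: explicitly it equals $2 \log T + 4\Re\left(\frac{L'}{L}(1,\chi)\right) + 2\log q + O(1)$. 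Positivity therefore yields only the one-sided bound $\Re\left(\frac{L'}{L}(1,\chi)\right) \gg -\log qT$, which is perfectly consistent with $L(1,\chi)$ being arbitrarily small: a Landau--Siegel zero $\beta$ close to $1$ makes $\frac{L'}{L}(1,\chi)$ large and \emph{positive}, so no choice of $T$ or of $s = \tfrac12 + (\log q)^{-1}$ turns positivity into a contradiction. An argument using the constant term alone is essentially the Gelbart--Lapid style of bound, and it cannot isolate $|L(1,\chi)|$ itself.

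What is missing is a second, independent evaluation of a truncated $L^2$-quantity \emph{from below} via Parseval, using the nonconstant Fourier coefficients --- which is where $1/L(2s,\overline{\chi})$ actually lives: $\rho_{\infty}(m,s,\chi) \propto \tau(\overline{\chi}) \sigma_{1-2s}(|m|,\chi) / \left(q^{2s} L(2s,\overline{\chi})\right)$. The paper bounds $\II = \int_{\eta}^{\infty}\int_0^1 \left|\Lambda^T E_{\infty}\left(z,\tfrac12,\chi\right)\right|^2 \frac{dx\,dy}{y^2}$ in two ways: from above by $\ll \frac{\log q \log qT}{q\eta|L(1,\chi)|}$ via Maa\ss{}--Selberg (using $N_q(z,\eta) \ll 1/q\eta$ and the effective bounds $|L(1,\chi)| \ll \log q$, $|L'(1,\chi)| \ll (\log q)^2$), and from below by $\gg \frac{1}{q|L(1,\chi)|^2} \sum_{T \leq m \leq 2T} |\sigma_0(m,\chi)|^2$ via Parseval with $\eta = 1/T$. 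The mismatch in powers --- $|L|^{-2}$ below versus $|L|^{-1}$ above --- is the mechanism that forces the lower bound, once the coefficient sum is shown to be large; this is Sarnak's inhomogeneity, and your proposal never invokes it. It is also here, and not in controlling truncation errors or counting elements of $\Gamma_0(q)$, that Brun--Titchmarsh enters: for complex $\chi$ of order $Q \geq 3$ one restricts the sum to primes and shows $\sum_{T \leq p \leq 2T} |1 + \chi(p)|^2 \gg T/\log T$ for $T = q^K$, the point being that the constant $2$ in Brun--Titchmarsh can be beaten when $Q \geq 3$ but not when $Q = 2$ (that failure being equivalent to the Siegel zero problem). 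For quadratic $\chi$ the paper instead restricts to perfect squares, getting only $\asymp \sqrt{T}$ terms; with $T = q$ this, and not the Gauss sum normalisation $|\tau(\chi)| = \sqrt{q}$ (which contributes the same factor $1/q$ to both bounds for $\II$ and cancels), is the true source of the $\sqrt{q}$ loss in the quadratic case.
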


Our proof of \hyperref[mainthm]{Theorem \ref*{mainthm}} makes use the fact that $L(s,\chi)$ appears in the Fourier expansion of an Eisenstein series associated to $\chi$ on $\Gamma_0(q) \backslash \Hb$, together with sieve theory --- specifically the Brun--Titchmarsh inequality --- to find these lower bounds. As is well-known, improving the constant in the Brun--Titchmarsh inequality is essentially equivalent the nonexistence of Landau--Siegel zeroes; it is for this same reason that the lower bounds in \hyperref[mainthm]{Theorem \ref*{mainthm}} are weak for quadratic characters, as we discuss in \hyperref[LandauSiegelremark]{Remark \ref*{LandauSiegelremark}}.

That one can use Eisenstein series to prove nonvanishing of $L$-functions is well-known, first appearing in unpublished work of Selberg, but such methods were not shown to give good effective lower bounds for $L$-functions on the line $\Re(s) = 1$ until the work of Sarnak \cite{Sarnak}. He showed that
\[|\zeta(1 + it)| \gg \frac{1}{(\log|t|)^3}\]
for $|t| > 1$ by exploiting the inhomogeneous form of the Maa\ss{}--Selberg relation for the Eisenstein series $E(z,s)$ for the group $\SL_2(\Z)$.

More precisely, for $t > 1$, Sarnak studied the integral
\[\II \defeq \int_{1/t}^{\infty} \int_{0}^{1} \left|\zeta(1 + 2it)\right|^2 \left|\Lambda^{t}\left(z,\frac{1}{2} + it\right)\right|^2 \, \frac{dx \, dy}{y^2}\]
involving a truncated Eisenstein series $\Lambda^T E(z,s)$ and found an upper bound up to a scalar multiple for this integral of the form
\[t (\log t)^2 |\zeta(1 + 2it)|\]
via the Maa\ss{}--Selberg relation, and a lower bound up to a scalar multiple of the form
\[\frac{1}{t} \sum_{\frac{t^2}{8} \leq m \leq \frac{t^2}{4}} \left|\sigma_{-2it}(m)\right|^2\]
via Parseval's identity, where
\[\sigma_{-2it}(m) \defeq \sum_{d \mid m} d^{-2it}.\]
By restricting the summation over $m$ to primes, Sarnak was able to use sieve theory to show that
\[\sum_{\frac{t^2}{8} \leq p \leq \frac{t^2}{4}} \left|\sigma_{-2it}(p)\right|^2 \gg \frac{t^2}{\log t},\]
from which the result follows. Indeed, the use of sieve theory to prove lower bounds for $\zeta(1 + it)$ (and also $L(1 + it,\chi)$) has its roots in work of Balasubramanian and Ramachandra \cite{Balasubramanian}.

The chief novelty of Sarnak's work is to use the Maa\ss{}--Selberg relation to obtain effective lower bounds for $\zeta(1 + it)$; more precisely, it is the inhomogeneous nature of the Fourier expansion of the Eisenstein series $E(z,s)$, whose constant term involves $\zeta(2s - 1)/\zeta(2s)$ and whose nonconstant terms involve $1/\zeta(2s)$. This method has been generalised by Gelbart and Lapid \cite{Gelbart} to determine effective lower bounds on the line $\Re(s) = 1$ for $L$-functions associated to automorphic representations on arbitrary reductive groups over number fields, albeit with the lower bound being in the weaker form $C|t|^{-n}$ for some constants $C,n$ depending on the $L$-function, for Gelbart and Lapid make no use of sieve theory in this generalised setting. More recently, Goldfeld and Li \cite{Goldfeld} have succeeded in generalising Sarnak's method to show that
\[\left|L\left(1 + it,\pi \times \widetilde{\pi}\right)\right| \gg_{\pi} \frac{1}{(\log|t|)^3}\]
for any cuspidal automorphic representation $\pi$ of $\GL_n(\A_{\Q})$ that is unramified and tempered at every place, with the implicit constant in the lower bound dependent on $\pi$.

All three of these results give lower bounds for $L$-functions on the line $\Re(s) = 1$ in the height aspect, namely in terms of $t$. In this article, we give the first example of Sarnak's method being used to give lower bounds for $L$-functions on the line $\Re(s) = 1$ in the level aspect, namely in terms of $q$.

\section{Eisenstein Series}

We introduce Eisenstein series for the group $\Gamma_0(q)$ associated to a primitive Dirichlet character $\chi$ modulo $q$. Standard references for this material are \cite{Deshouillers}, \cite{Duke}, and \cite{Iwaniec}.

\subsection{Cusps}

Let $\Hb$ be the upper half plane, upon which $\SL_2(\R)$ acts via M\"{o}bius transformations $\gamma z = \frac{az + b}{cz + d}$ for $\gamma = \left(\begin{smallmatrix} a & b \\ c & d \end{smallmatrix}\right) \in \SL_2(\R)$ and $z \in \Hb$. Let $q$ be a positive integer, and let $\aa$ be a cusp of $\Gamma_0(q) \backslash \Hb$, where
\[\Gamma_0(q) \defeq \left\{\begin{pmatrix} a & b \\ c & d \end{pmatrix} \in \SL_2(\Z) \colon c \equiv 0 \hspace{-.2cm} \pmod{q}\right\},\]
and we denote the stabiliser of $\aa$ by
\[\Gamma_{\aa} \defeq \left\{\gamma \in \Gamma_0(q) \colon \gamma \aa = \aa\right\}.\]
This subgroup of $\Gamma_0(q)$ is generated by two parabolic elements $\pm \gamma_{\aa}$, where
\[\gamma_{\aa} \defeq \sigma_{\aa} \begin{pmatrix} 1 & 1 \\ 0 & 1 \end{pmatrix} \sigma_{\aa}^{-1},\]
and the scaling matrix $\sigma_{\aa} \in \SL_2(\R)$ is such that
\[\sigma_{\aa} \infty = \aa, \qquad \sigma_{\aa}^{-1} \Gamma_{\infty} \sigma_{\aa} = \Gamma_{\infty},\]
where
\[\Gamma_{\infty} \defeq \left\{\pm \begin{pmatrix} 1 & n \\ 0 & 1 \end{pmatrix} \in \Gamma_0(q) \colon n \in \Z\right\}\]
is the stabiliser of the cusp at infinity. The scaling matrix is unique up to translation on the right.

Let $\chi$ be a primitive character modulo $q$. A cusp $\aa$ of $\Gamma_0(q) \backslash \Hb$ is said to be singular with respect to $\chi$ if $\chi(\gamma_{\aa}) = 1$, where $\chi(\gamma) \defeq \chi(d)$ for $\gamma = \left(\begin{smallmatrix} a & b \\ c & d \end{smallmatrix}\right) \in \Gamma_0(q)$. As $\chi$ is primitive, any singular cusp is equivalent to $1/v$ for a single unique divisor $v$ of $q$ satisfying $vw = q$ and $(v,w) = 1$, where $w$ is the width of the cusp; when $v = q$, this cusp is equivalent to the cusp at infinity, while when $v = 1$, the cusp is equivalent to the cusp at zero. Note that if $q = 1$, so that $\chi$ is the trivial character, there is merely a single equivalence class of cusps, namely the cusp at infinity.

The scaling matrix $\sigma_{\aa} \in \SL_2(\R)$ for a singular cusp $\aa \sim 1/v$, $v \neq q$, can be chosen to be
\[\sigma_{\aa} \defeq \begin{pmatrix} \sqrt{w} & 0 \\ v \sqrt{w} & \dfrac{1}{\sqrt{w}}\end{pmatrix},\]
while for the cusp at infinity, we simply take $\sigma_{\infty}$ to be the identity. 

The Bruhat decomposition for $\sigma_{\aa}^{-1} \Gamma_0(q) \sigma_{\bb}$ \cite[Theorem 2.7]{Iwaniec} states that
\[\sigma_{\aa}^{-1} \Gamma_0(q) \sigma_{\bb} = \delta_{\aa\bb} \Omega_{\infty} \sqcup \bigsqcup_{c > 0} \bigsqcup_{d \hspace{-.2cm} \pmod{c}} \Omega_{d/c},\]
where $\delta_{\aa\bb} = 1$ if $\aa \sim \bb$ and $0$ otherwise, and
\begin{gather*}
\Omega_{\infty} \defeq \Gamma_{\infty} \omega_{\infty}, \qquad \omega_{\infty} = \begin{pmatrix} 1 & * \\ 0 & 1 \end{pmatrix} \in \sigma_{\aa}^{-1} \Gamma_0(q) \sigma_{\bb},	\\
\Omega_{d/c} \defeq \Gamma_{\infty} \omega_{d/c} \Gamma_{\infty}, \qquad \omega_{d/c} = \begin{pmatrix} * & * \\ c & d \end{pmatrix} \in \sigma_{\aa}^{-1} \Gamma_0(q) \sigma_{\bb} \quad \text{with $c > 0$,}
\end{gather*}
and $c,d$ runs over all real numbers such that $\sigma_{\aa}^{-1} \Gamma_0(q) \sigma_{\bb}$ contains $\left(\begin{smallmatrix} * & * \\ c & d \end{smallmatrix}\right)$. In particular, for the cusp at infinity we have the Bruhat decomposition
\[\sigma_{\infty}^{-1} \Gamma_0(q) \sigma_{\infty} = \Gamma_{\infty} \sqcup \bigsqcup_{\substack{c = 1 \\ c \equiv 0 \hspace{-.2cm} \pmod{q}}}^{\infty} \bigsqcup_{\substack{d \hspace{-.2cm} \pmod{c} \\ (c,d) = 1}} \Gamma_{\infty} \begin{pmatrix} * & * \\ c & d \end{pmatrix} \Gamma_{\infty}.\]
For $\aa \sim \infty$ and $\bb \sim 1/v$ a nonequivalent singular cusp with $1 \leq v < q$, $v$ dividing $q$, $vw = q$, and $(v,w) = 1$, and for any $\gamma = \left(\begin{smallmatrix} a & b \\ c & d \end{smallmatrix}\right) \in \Gamma_0(q)$, we have that
\[\sigma_{\infty}^{-1} \gamma \sigma_{\bb} = \begin{pmatrix} (a + bv)\sqrt{w} & \dfrac{b}{\sqrt{w}} \\ (c + dv)\sqrt{w} & \dfrac{d}{\sqrt{w}}\end{pmatrix},\]
and so
\begin{multline}\label{Bruhatinftyb}
\sigma_{\infty}^{-1} \Gamma_0(q) \sigma_{\bb} = \left\{\begin{pmatrix} a \sqrt{w} & \dfrac{b}{\sqrt{w}} \\ c \sqrt{w} & \dfrac{d}{\sqrt{w}} \end{pmatrix} \in \SL_2(\R) \colon \begin{pmatrix} a & b \\ c & d \end{pmatrix} \in \SL_2(\Z),\right.	\\
\left. \vphantom{\begin{pmatrix} \dfrac{b}{\sqrt{w}} \\ \dfrac{d}{\sqrt{w}} \end{pmatrix}} c \equiv 0 \hspace{-.2cm} \pmod{v}, \ d \equiv \frac{c}{v} \hspace{-.2cm} \pmod{w}, \ (c,d) = 1, \ (c,w) = 1\right\}.
\end{multline}
So the Bruhat decomposition in this case can be explicitly written in the form
\begin{equation}\label{Bruhatinftybexplicit}
\sigma_{\infty}^{-1} \Gamma_0(q) \sigma_{\bb} = \bigsqcup_{\substack{c = 1 \\ (c,w) = 1 \\ c \equiv 0 \hspace{-.2cm} \pmod{v}}}^{\infty} \bigsqcup_{\substack{d \hspace{-.2cm} \pmod{cw} \\ (cw,d) = 1 \\ d \equiv \frac{c}{v} \hspace{-.2cm} \pmod{w}}} \Gamma_{\infty} \begin{pmatrix} * & * \\ c\sqrt{w} & \dfrac{d}{\sqrt{w}} \end{pmatrix} \Gamma_{\infty}.
\end{equation}

\subsection{Eisenstein Series}

Given a primitive Dirichlet character $\chi$ modulo $q$ and a singular cusp $\aa$ of $\Gamma_0(q) \backslash \Hb$, we define the Eisenstein series $E_{\aa}\left(z, s, \chi\right)$ for $z \in \Hb$ and $\Re(s) > 1$ by
\[E_{\aa}\left(z, s, \chi\right) \defeq \sum_{\gamma \in \Gamma_{\aa} \backslash \Gamma_0(q)} \overline{\chi}(\gamma) j_{\sigma_{\aa}^{-1} \gamma}(z)^{-\kappa} \Im\left(\sigma_{\aa}^{-1} \gamma z\right)^s,\]
where $\kappa \in \{0,1\}$ is such that $\chi(-1) = (-1)^{\kappa}$, and for $\gamma = \left(\begin{smallmatrix} a & b \\ c & d \end{smallmatrix}\right) \in \SL_2(\R)$,
\[j_{\gamma}(z) \defeq \frac{cz + d}{|cz + d|} = e^{i \arg(cz + d)}.\]
The Eisenstein series associated to a singular cusp $\aa$ is independent of the choice of representative of $\aa$ and of the scaling matrix $\sigma_{\aa}$. For fixed $z \in \Hb$, the Eisenstein series $E_{\aa}\left(z, s, \chi\right)$ converges absolutely for $\Re(s) > 1$ and extends meromorphically to the entire complex plane with no poles on the closed right half-plane $\Re(s) \geq 1/2$ except at $s = 1$ when $q = 1$, so that $\chi$ is the trivial character.

For any $z \in \Hb$ and $\gamma_1, \gamma_2 \in \SL_2(\R)$, the $j$-factor satisfies the cocycle relation
\begin{equation}\label{cocycle}
j_{\gamma_1 \gamma_2}(z) = j_{\gamma_2}(z) j_{\gamma_1}(\gamma_2 z),
\end{equation}
while the Eisenstein series satisfies the automorphy condition
\begin{equation}\label{automorphy}
E_{\aa}\left(\gamma z, s, \chi\right) = \chi(\gamma) j_{\gamma}(z)^{\kappa} E_{\aa}\left(z, s, \chi\right)
\end{equation}
for any $\gamma \in \Gamma_0(q)$.

For any singular cusps $\aa,\bb$ of $\Gamma_0(q)$, one can show using the Bruhat decomposition that there exists a function $\varphi_{\aa\bb}(s,\chi)$ such that the constant term in the Fourier expansion for the function $j_{\sigma_{\bb}}(z)^{-\kappa} E_{\aa}\left(\sigma_{\bb} z, s, \chi\right)$ is
\[c_{\aa \bb}(z, s, \chi) \defeq \int_{0}^{1} j_{\sigma_{\bb}}(z)^{-\kappa} E_{\aa}\left(\sigma_{\bb} z, s, \chi\right) \, dx = \delta_{\aa \bb} y^s + \varphi_{\aa\bb}(s,\chi) y^{1 - s}.\]
The functions $\varphi_{\aa\bb}(s,\chi)$ are the entries of the scattering matrix associated to $\chi$. We will calculate $\varphi_{\aa\bb}(s,\chi)$ when $\aa \sim \infty$ for each nonsingular cusp $\bb$ of $\Gamma_0(q)$ with respect to $\chi$, and also find the rest of the Fourier coefficients of $E_{\infty}(z,s,\chi)$.

\subsection{Fourier Expansion of \texorpdfstring{$E_{\infty}(z,s,\chi)$}{E(z,s,\83\307)}}

\begin{lemma}\label{modtodivlemma}
Let $\chi$ be a primitive character modulo $q$. For $m \neq 0$ and $c \equiv 0 \pmod{q}$,
\[\sum_{\substack{d \hspace{-.2cm} \pmod{c} \\ (c,d) = 1}} \chi(d) e\left(\frac{md}{c}\right) = \chi(\sgn(m)) \tau(\chi) \sum_{d \mid \left(|m|,\frac{c}{q}\right)} d \overline{\chi}\left(\frac{|m|}{d}\right) \chi\left(\frac{c}{dq}\right) \mu\left(\frac{c}{dq}\right).\]
\end{lemma}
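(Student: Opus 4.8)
The plan is to reduce the left-hand side to a combination of standard Gauss sums for the primitive character $\chi$. Write $c = qr$ with $r = c/q$, which is a positive integer since $q \mid c$. The first observation is that $(d,c) = 1$ if and only if $(d,q) = 1$ and $(d,r) = 1$ (coprimality to a product is coprimality to each factor, regardless of whether $q$ and $r$ share factors), and since $\chi(d) = 0$ whenever $(d,q) > 1$, the condition $(d,q) = 1$ is automatically enforced by the summand. Thus I can rewrite the sum as
\[\sum_{\substack{d \bmod c \\ (d,r) = 1}} \chi(d) e\left(\frac{md}{c}\right),\]
with the understanding that $\chi(d) = 0$ when $(d,q) > 1$.

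Next I would remove the remaining coprimality condition by Möbius inversion, using that $\sum_{f \mid (d,r)} \mu(f)$ equals $1$ when $(d,r) = 1$ and $0$ otherwise, interchanging the order of summation, and substituting $d = f d'$ so that $d'$ runs modulo $c/f = q(r/f)$. Pulling out $\chi(f)$ expresses the sum as
\[\sum_{f \mid r} \mu(f) \chi(f) \sum_{d' \bmod q(r/f)} \chi(d') e\left(\frac{m d'}{q(r/f)}\right).\]
The core of the argument is then the evaluation of the inner extended Gauss sum $G_s(m) \defeq \sum_{d \bmod qs} \chi(d) e(md/(qs))$ for $s = r/f$. Splitting $d \bmod qs$ as $d = a + qu$ with $a \bmod q$ and $u \bmod s$, and using $\chi(a + qu) = \chi(a)$, factors $G_s(m)$ as $\left(\sum_{a \bmod q}\chi(a)e(ma/(qs))\right)\left(\sum_{u \bmod s}e(mu/s)\right)$. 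The sum over $u$ equals $s$ if $s \mid m$ and $0$ otherwise; when $s \mid m$ the sum over $a$ becomes $\sum_{a \bmod q}\chi(a)e((m/s)a/q)$, which by the fundamental identity for primitive Gauss sums equals $\overline{\chi}(m/s)\tau(\chi)$. Hence $G_s(m) = s\,\overline{\chi}(m/s)\,\tau(\chi)$ when $s \mid m$ and $0$ otherwise.

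Here primitivity of $\chi$ is essential: the identity $\sum_{a \bmod q}\chi(a)e(na/q) = \overline{\chi}(n)\tau(\chi)$ holds for all integers $n$, and not merely for those coprime to $q$, precisely because $\chi$ is primitive. Substituting $G_{r/f}(m)$ back in, only those $f$ with $(r/f) \mid m$ survive, and the change of variables $g = r/f$ turns the expression into
\[\tau(\chi) \sum_{\substack{g \mid r \\ g \mid m}} g\,\overline{\chi}(m/g)\,\chi(r/g)\,\mu(r/g).\]
Recalling $r = c/q$, so that $r/g = c/(gq)$ and $g \mid (m,c/q)$, this is exactly the claimed expression when $m > 0$. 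The remaining step is sign bookkeeping for $m < 0$: writing $m = -|m|$, the only change is the factor $\overline{\chi}(m/g) = \overline{\chi}(-1)\overline{\chi}(|m|/g) = \chi(-1)\overline{\chi}(|m|/g)$, where $\overline{\chi}(-1) = \chi(-1)$ because $\chi(-1) = \pm 1$ is real; since $\chi(-1) = \chi(\sgn(m))$ and $(m, c/q) = (|m|, c/q)$, this recovers the stated formula in general. I expect the main obstacle to be the careful evaluation of $G_s(m)$, in particular extracting the divisibility condition $s \mid m$ and applying the primitive Gauss sum identity correctly, with the Möbius manipulation and the final sign bookkeeping being routine once this structure is in place.
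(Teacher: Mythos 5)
Your proof is correct and complete, but it takes a genuinely different route from the paper, whose ``proof'' is essentially a citation: for $m > 0$ the paper appeals to Miyake's Lemma 3.1.3, and it deduces the case $m < 0$ by replacing $m$ with $|m|$ and $\chi$ with $\overline{\chi}$ in that result, conjugating both sides, and using $\overline{\tau(\overline{\chi})} = \chi(-1)\tau(\chi)$. You instead prove the identity from scratch: detect the condition $(d, c/q) = 1$ by M\"obius inversion (correctly noting that $(d,c)=1$ is equivalent to $(d,q)=1$ together with $(d,c/q)=1$ with no coprimality assumption on $q$ and $c/q$, and that $\chi(d)$ enforces the first condition), factor the resulting complete sum modulo $qs$ through the bijection $(a,u) \mapsto a + qu$ into a Gauss-type sum over $a \bmod q$ times a geometric sum over $u \bmod s$, and then invoke the identity $\sum_{a \bmod q} \chi(a) e(na/q) = \overline{\chi}(n)\tau(\chi)$, valid for \emph{all} integers $n$ precisely because $\chi$ is primitive --- which you rightly flag as the crux. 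You also handle the sign of $m$ inline, via $\overline{\chi}(m/g) = \chi(-1)\overline{\chi}(|m|/g)$ and $\chi(\sgn(m)) = \chi(-1)$ for $m<0$, rather than by the paper's conjugation trick; both are valid. The trade-off is clear: the paper's route is two sentences but outsources the entire computation, while yours makes the lemma self-contained and transparently exhibits where primitivity is used, at the cost of length.
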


Here, as usual, we define $e(x) \defeq e^{2\pi i x}$ for $x \in \R$.

\begin{proof}
For $m$ positive, this is \cite[Lemma 3.1.3]{Miyake}. The result for $m$ negative follows by replacing $m$ with $|m|$ and $\chi$ with $\overline{\chi}$, then taking complex conjugates of both sides and using the fact that $\overline{\tau(\overline{\chi})} = \chi(-1) \tau(\chi)$.
\end{proof}

\begin{proposition}[{cf.\ \cite[Theorem 3.4]{Iwaniec}}]\label{Fouriercoeffsprop}
The Eisenstein series $E_{\infty}(z,s,\chi)$ has the Fourier expansion
\[E_{\infty}(z,s,\chi) = y^s + \varphi_{\infty \infty}(s,\chi) y^{1 - s} + \sum_{\substack{m = -\infty \\ m \neq 0}}^{\infty} \rho_{\infty}(m,s,\chi) W_{\sgn(m) \frac{\kappa}{2}, s - \frac{1}{2}}\left(4\pi |m| y\right) e(mx),\]
where $W_{\alpha,\nu}(y)$ is the Whittaker function,
\[\varphi_{\infty \infty}(s,\chi) = \begin{dcases*}
\sqrt{\pi} \frac{\Gamma\left(s - \frac{1}{2}\right)}{\Gamma(s)} \frac{\zeta(2s - 1)}{\zeta(2s)} & if $q = 1$,	\\
0 & if $q \geq 2$,
\end{dcases*}\]
and for $m \neq 0$,
\[\rho_{\infty}(m,s,\chi) = \frac{\chi(\sgn(m)) i^{-\kappa} \tau(\overline{\chi}) \pi^s |m|^{s - 1}}{q^{2s} \Gamma\left(s + \sgn(m) \frac{\kappa}{2}\right) L(2s, \overline{\chi})} \sigma_{1 - 2s}(|m|,\chi),\]
where $\tau(\chi)$ is the Gauss sum of $\chi$ and
\[\sigma_s(m,\chi) \defeq \sum_{d \mid m} d^s \chi\left(\frac{m}{d}\right).\]
\end{proposition}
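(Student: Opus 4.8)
The plan is to compute the Fourier expansion straight from the defining series, factoring each coefficient into an arithmetic part---a character sum dispatched by \hyperref[modtodivlemma]{Lemma \ref*{modtodivlemma}}---and an archimedean part, namely an integral that produces the Whittaker function. Working first in the region $\Re(s) > 1$ of absolute convergence, and noting that $\sigma_{\infty}$ is the identity, the Bruhat decomposition for the cusp at infinity lets me write
\[E_{\infty}(z,s,\chi) = y^s + \sum_{\substack{c > 0 \\ c \equiv 0 \hspace{-.2cm}\pmod{q}}} \sum_{\substack{d \in \Z \\ (c,d) = 1}} \overline{\chi}(d) \frac{y^s}{(cz + d)^{\kappa} |cz + d|^{2s - \kappa}},\]
using $\Im(\gamma z)^s = y^s |cz+d|^{-2s}$ and $j_{\gamma}(z)^{-\kappa} = |cz + d|^{\kappa}(cz+d)^{-\kappa}$; here the left quotient by $\Gamma_{\infty}$ pins each coset to its bottom row, chosen with $c > 0$, so the nontrivial cosets are parametrised by pairs $(c,d)$ with $c$ a positive multiple of $q$ and $d \in \Z$ coprime to $c$.

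To extract the $m$-th Fourier coefficient I would write $d = d_0 + cn$ with $d_0$ ranging over residues modulo $c$ coprime to $c$ and $n \in \Z$. Since $\overline{\chi}(d)$ depends only on $d$ modulo $q$ and $q \mid c$, the translation sum over $n$ together with $\int_0^1 \cdots \, e(-mx)\,dx$ unfolds to an integral over all of $\R$; writing $(cz+d_0)^{\kappa}|cz+d_0|^{2s-\kappa} = (cz+d_0)^{s+\frac{\kappa}{2}}(c\overline{z}+d_0)^{s-\frac{\kappa}{2}}$ and substituting $u = cx + d_0$, the coefficient factors as
\[\left(\sum_{\substack{d_0 \hspace{-.2cm}\pmod{c} \\ (c,d_0) = 1}} \overline{\chi}(d_0)\, e\!\left(\frac{m d_0}{c}\right)\right) \frac{1}{c}\int_{-\infty}^{\infty} \frac{y^s \, e(-mu/c)}{(u + icy)^{s + \frac{\kappa}{2}}(u - icy)^{s - \frac{\kappa}{2}}}\,du.\]
The character sum is exactly the object evaluated by \hyperref[modtodivlemma]{Lemma \ref*{modtodivlemma}} applied to $\overline{\chi}$. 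Rescaling $u = ct$ pulls a clean factor $c^{-2s}$ out of the integral, leaving a $c$-independent integral $J(m)$. Writing $c = qc'$ and summing over $c' \geq 1$, the divisor sum produced by the lemma combines with the Dirichlet series in $c'$: setting $c' = de$ with $d \mid |m|$ and using the Euler product $\sum_{e \geq 1}\overline{\chi}(e)\mu(e)e^{-2s} = L(2s,\overline{\chi})^{-1}$ together with the definition of $\sigma_{1-2s}(|m|,\chi)$, the whole arithmetic factor collapses to $\chi(\sgn m)\,\tau(\overline{\chi})\,q^{-2s}\,\sigma_{1-2s}(|m|,\chi)/L(2s,\overline{\chi})$ for $m \neq 0$, where $\overline{\chi}(\sgn m) = \chi(\sgn m)$ because $\chi(-1) = \pm 1$.

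The main obstacle is the archimedean integral $J(m) = \int_{-\infty}^{\infty} y^s\, e(-mt)\,(t+iy)^{-s-\kappa/2}(t-iy)^{-s+\kappa/2}\,dt$: I would substitute $t = y\tau$ to reduce it to a standard integral representation of the Whittaker function, yielding the factor $i^{-\kappa}\pi^s |m|^{s-1}\,\Gamma\!\left(s + \sgn(m)\tfrac{\kappa}{2}\right)^{-1} W_{\sgn(m)\frac{\kappa}{2},\, s - \frac12}(4\pi|m|y)$, with the sign of $m$ dictating both the first Whittaker index and the shift in the Gamma factor; keeping track of the constants and the branches of the powers is the fiddly step, and assembling it with the arithmetic factor gives the stated $\rho_{\infty}(m,s,\chi)$. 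Finally the constant term ($m = 0$) must be treated separately: the character sum becomes $\sum_{(c,d_0)=1}\overline{\chi}(d_0)$, which vanishes for every such $c$ when $q \geq 2$ by orthogonality, $\overline{\chi}$ inducing a nonprincipal character of $(\Z/c\Z)^{\times}$, so that $\varphi_{\infty\infty}(s,\chi) = 0$; for $q = 1$ it equals the Euler totient $\varphi(c)$, and the Dirichlet series $\sum_c \varphi(c)\,c^{-2s}$ together with the $m=0$ integral (a Beta-function evaluation giving $y^{1-s}$) reproduces the classical factor $\sqrt{\pi}\,\Gamma(s-\tfrac12)\Gamma(s)^{-1}\zeta(2s-1)/\zeta(2s)$. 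Meromorphic continuation in $s$ then extends the identity beyond $\Re(s) > 1$.
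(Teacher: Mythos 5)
Your proposal is correct and follows essentially the same route as the paper's proof: expand via the Bruhat decomposition at $\infty$, unfold the translation sum into an integral over $\R$, evaluate the resulting character sum $\sum_{d \hspace{-.1cm} \pmod{c}} \overline{\chi}(d) e(md/c)$ by \hyperref[modtodivlemma]{Lemma \ref*{modtodivlemma}} applied to $\overline{\chi}$, sum over $c = dqn$ to produce $q^{-2s}\sigma_{1-2s}(|m|,\chi)/L(2s,\overline{\chi})$, evaluate the archimedean integral as a Whittaker function via a standard table identity (the paper cites \cite[(3.384.9)]{Gradshteyn}), and handle $m = 0$ separately by orthogonality for $q \geq 2$ and by $\sum_c \varphi(c) c^{-2s} = \zeta(2s-1)/\zeta(2s)$ for $q = 1$. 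The only differences are cosmetic (parametrising cosets by $d \in \Z$ rather than $d \pmod{c}$ plus a translation sum, and performing the change of variables in stages rather than all at once via $x \mapsto yt - d/c$).
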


Note in particular that if $\kappa = 0$, so that $\chi$ is even, the Whittaker function is simply
\[W_{0, s - \frac{1}{2}}\left(4\pi |m| y\right) = \sqrt{4|m|y} K_{s - \frac{1}{2}}(2\pi |m| y),\]
where $K_{\nu}(y)$ is the $K$-Bessel function. On the other hand, if $\kappa = 1$, so that $\chi$ is odd, and we set $s = 1/2$, then
\[W_{\sgn(m) \frac{\kappa}{2}, 0}\left(4\pi |m| y\right) = \begin{dcases*}
\sqrt{4\pi |m| y} e^{-2\pi|m| y} & if $m > 0$,	\\
\sqrt{4\pi |m| y} e^{2\pi|m| y} \int_{4\pi|m|y}^{\infty} \frac{e^{-u}}{u} \, du & if $m < 0$.
\end{dcases*}\]

\begin{proof}
Via the Bruhat decomposition \eqref{Bruhatinftybexplicit}, $E_{\infty}(z,s,\chi)$ is equal to
\[y^s + \sum_{\substack{c = 1 \\ c \equiv 0 \hspace{-.2cm} \pmod{q}}}^{\infty} \sum_{\substack{d \hspace{-.2cm} \pmod{c} \\ (c,d) = 1}} \overline{\chi}(d) \sum_{n = -\infty}^{\infty} \left(\frac{c(z + n) + d}{|c(z + n) + d|}\right)^{-\kappa} \frac{y^s}{|c(z + n) + d|^{2s}}.\]
So if $m = 0$, the zeroeth Fourier coefficient of $E_{\infty}(z,s,\chi)$ is
\begin{multline*}
y^s + \sum_{\substack{c = 1 \\ c \equiv 0 \hspace{-.2cm} \pmod{q}}}^{\infty} \sum_{\substack{d \hspace{-.2cm} \pmod{c} \\ (c,d) = 1}} \overline{\chi}(d) \int_{-\infty}^{\infty} \left(\frac{cz + d}{|cz + d|}\right)^{-\kappa} \frac{y^s}{|cz + d|^{2s}} \, dx	\\
= y^s + y^{1 - s} \int_{-\infty}^{\infty} \left(\frac{t + i}{|t + i|}\right)^{-\kappa} \frac{1}{|t + i|^{2s}} \, dt \sum_{\substack{c = 1 \\ c \equiv 0 \hspace{-.2cm} \pmod{q}}}^{\infty} \frac{1}{c^{2s}} \sum_{\substack{d \hspace{-.2cm} \pmod{c} \\ (c,d) = 1}} \overline{\chi}(d)
\end{multline*}
by the change of variables $x \mapsto yt - \frac{d}{c}$. From \cite[(8.381.1)]{Gradshteyn}, we have that
\[\int_{-\infty}^{\infty} \left(\frac{t + i}{|t + i|}\right)^{-\kappa} \frac{1}{|t + i|^{2s}} \, dt = i^{-\kappa} \sqrt{\pi} \frac{\Gamma\left(\frac{1}{2}(2s - 1 + \kappa)\right)}{\Gamma\left(\frac{1}{2}(2s + \kappa)\right)},\]
while for $c \equiv 0 \pmod{q}$, the fact that $\chi$ is primitive implies that 
\[\sum_{\substack{c = 1 \\ c \equiv 0 \hspace{-.2cm} \pmod{q}}}^{\infty} \frac{1}{c^{2s}} \sum_{\substack{d \hspace{-.2cm} \pmod{c} \\ (c,d) = 1}} \overline{\chi}(d) = \begin{dcases*}
\sum_{c = 1}^{\infty} \frac{\varphi(c)}{c^{2s}} = \frac{\zeta(2s - 1)}{\zeta(2s)} & if $q = 1$,	\\
0 & if $q \geq 2$.
\end{dcases*}\]

If $m \neq 0$, on the other hand, then the $m$-th Fourier coefficient is
\begin{multline*}
\sum_{\substack{c = 1 \\ c \equiv 0 \hspace{-.2cm} \pmod{q}}}^{\infty} \sum_{\substack{d \hspace{-.2cm} \pmod{c} \\ (c,d) = 1}} \overline{\chi}(d) \int_{-\infty}^{\infty} \left(\frac{cz + d}{|cz + d|}\right)^{-\kappa} \frac{y^s}{|cz + d|^{2s}} e(-mx) \, dx	\\
= y^{1 - s} \int_{-\infty}^{\infty} \left(\frac{t + i}{|t + i|}\right)^{-\kappa} \frac{e(-myt)}{|t + i|^{2s}} \, dt \sum_{\substack{c = 1 \\ c \equiv 0 \hspace{-.2cm} \pmod{q}}}^{\infty} \frac{1}{c^{2s}} \sum_{\substack{d \hspace{-.2cm} \pmod{c} \\ (c,d) = 1}} \overline{\chi}(d)  e\left(\frac{md}{c}\right)
\end{multline*}
again by the change of variables $x \mapsto yt - \frac{d}{c}$. Moreover, \cite[(3.384.9)]{Gradshteyn} implies that
\[\int_{-\infty}^{\infty} \left(\frac{t + i}{|t + i|}\right)^{-\kappa} \frac{e(-myt)}{|t + i|^{2s}} \, dt = \frac{i^{-\kappa} \pi^s |m|^{s - 1} y^{s - 1}}{\Gamma\left(s + \sgn(m) \frac{\kappa}{2}\right)} W_{\sgn(m) \frac{\kappa}{2}, s - \frac{1}{2}}\left(4\pi |m| y\right),\]
and via \hyperref[modtodivlemma]{Lemma \ref*{modtodivlemma}},
\begin{align*}
\hspace{2cm} & \hspace{-2cm} \sum_{\substack{c = 1 \\ c \equiv 0 \hspace{-.2cm} \pmod{q}}}^{\infty} \frac{1}{c^{2s}} \sum_{\substack{d \hspace{-.2cm} \pmod{c} \\ (c,d) = 1}} \overline{\chi}(d) e\left(\frac{md}{c}\right)	\\
& = \chi(\sgn(m)) \tau(\overline{\chi}) \sum_{d \mid |m|} d \chi\left(\frac{|m|}{d}\right) \sum_{\substack{c = 1 \\ c \equiv 0 \hspace{-.2cm} \pmod{dq}}}^{\infty} \frac{\overline{\chi}\left(\frac{c}{dq}\right) \mu\left(\frac{c}{dq}\right)}{c^{2s}}	\\
& = \chi(\sgn(m)) \frac{\tau(\overline{\chi})}{q^{2s}} \sum_{d \mid |m|} d^{1 - 2s} \chi\left(\frac{|m|}{d}\right) \sum_{n = 1}^{\infty} \frac{\overline{\chi}(n) \mu(n)}{n^{2s}}	\\
& = \chi(\sgn(m)) \frac{\tau(\overline{\chi})}{q^{2s} L(2s, \overline{\chi})} \sigma_{1 - 2s}(|m|,\chi)
\end{align*}
where we have let $c = dqn$. We thereby obtain the desired identity.
\end{proof}

\begin{proposition}
Suppose that $q \geq 2$. Then $\varphi_{\infty \bb}(s,\chi)$ vanishes unless $\bb \sim 1$, in which case
\begin{equation}\label{scatteringLambda}
\varphi_{\infty 1}(s,\chi) = \frac{\overline{\tau(\chi)}}{q^s} \frac{\Lambda(2 - 2s,\chi)}{\Lambda(2s,\overline{\chi})},
\end{equation}
where
\begin{equation}\label{Lambdadef}
\Lambda(s,\chi) \defeq \left(\frac{\pi}{q}\right)^{-\frac{s + \kappa}{2}} \Gamma\left(\frac{s + \kappa}{2}\right) L(s,\chi),
\end{equation}
is the completed Dirichlet $L$-function. In particular,
\begin{equation}\label{scatteringunitary}
\left|\varphi_{\infty 1}\left(\frac{1}{2} + it,\chi\right)\right| = 1.
\end{equation}
\end{proposition}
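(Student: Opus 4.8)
The plan is to compute the constant term $c_{\infty\bb}(z,s,\chi)=\int_0^1 j_{\sigma_\bb}(z)^{-\kappa}E_\infty(\sigma_\bb z,s,\chi)\,dx$ by unfolding against the Bruhat decomposition~\eqref{Bruhatinftybexplicit} for the ordered pair of cusps $(\infty,\bb)$, in exactly the manner of the proof of Proposition~\ref{Fouriercoeffsprop} (which treats $(\infty,\infty)$). Writing $\bb\sim 1/v$ with $vw=q$, $(v,w)=1$, each relevant $\gamma\in\Gamma_0(q)$ contributes a term whose bottom row is $(c\sqrt w,\,d/\sqrt w)$, weighted by $\overline\chi(d)$; carrying out the change of variables and summing over the translates supplied by the right action of $\Gamma_\infty$ unfolds $\int_0^1$ to $\int_{-\infty}^{\infty}$. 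The $m=0$ coefficient then factors as the product of the archimedean integral $\int_{-\infty}^{\infty}\left(\frac{t+i}{|t+i|}\right)^{-\kappa}|t+i|^{-2s}\,dt=i^{-\kappa}\sqrt\pi\,\Gamma(\tfrac12(2s-1+\kappa))/\Gamma(\tfrac12(2s+\kappa))$ from~\cite{Gradshteyn} (the very same integral as in Proposition~\ref{Fouriercoeffsprop}), a power $w^{-s}$, and a Dirichlet series $\sum_{c}S(c)c^{-2s}$ in which $S(c)\defeq\sum\overline\chi(d)$ runs over $d\bmod cw$ with $(cw,d)=1$ and $d\equiv c/v\pmod w$. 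Everything thus reduces to the finite character sum $S(c)$.

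The vanishing assertion is the crux, and I expect the cleanest formulation of it to be the main obstacle. Factor $\chi=\chi_v\chi_w$ by the Chinese remainder theorem into its primitive components modulo $v$ and $w$. The congruence $d\equiv c/v\pmod w$ fixes $\overline{\chi_w}(d)$ to a constant, whereas as $d$ ranges over the residues modulo $cw$ coprime to $cw$ subject to that congruence, its reduction modulo $v$ runs over all of $(\Z/v\Z)^{\times}$ with multiplicity independent of the residue --- a uniformity I would check prime-by-prime, using $v\mid c$ and $(v,w)=1$. Hence $S(c)$ equals a nonzero constant times $\sum_{e\bmod v,\,(e,v)=1}\overline{\chi_v}(e)$, and this orthogonality sum vanishes exactly when $\chi_v$ is nontrivial, i.e. when $v>1$. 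Combined with $\varphi_{\infty\infty}=0$ (for $q\geq2$) from Proposition~\ref{Fouriercoeffsprop}, which disposes of $\bb\sim\infty$, this shows $\varphi_{\infty\bb}$ vanishes unless $v=1$, that is, unless $\bb\sim 1$.

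For $\bb\sim 1$ we have $v=1$, $w=q$, the congruence reads $d\equiv c\pmod q$, and so $\overline\chi(d)=\overline\chi(c)$ throughout, giving $S(c)=\overline\chi(c)\varphi(c)$ (the factor $\varphi(c)$ being the count of admissible $d\bmod cq$). The Dirichlet series $\sum_{c\geq1}\overline\chi(c)\varphi(c)c^{-2s}$ I would evaluate by writing $\varphi=\mathrm{id}\ast\mu$ and splitting the Euler product as $L(2s-1,\overline\chi)/L(2s,\overline\chi)$; collecting the archimedean factor, $q^{-s}$ (from $w^{-s}$ with $w=q$), and this ratio yields
\[\varphi_{\infty 1}(s,\chi)=\frac{i^{-\kappa}\sqrt\pi}{q^{s}}\,\frac{\Gamma\!\left(\frac{2s-1+\kappa}{2}\right)}{\Gamma\!\left(\frac{2s+\kappa}{2}\right)}\,\frac{L(2s-1,\overline\chi)}{L(2s,\overline\chi)}.\]
To recognise this as~\eqref{scatteringLambda}, I would apply the functional equation $\Lambda(s,\chi)=i^{-\kappa}q^{-1/2}\tau(\chi)\Lambda(1-s,\overline\chi)$ to write $\Lambda(2-2s,\chi)=i^{-\kappa}q^{-1/2}\tau(\chi)\Lambda(2s-1,\overline\chi)$, unfold the definition~\eqref{Lambdadef} of the completed values at $2s-1$ and $2s$, and use $|\tau(\chi)|^2=q$ to simplify $\overline{\tau(\chi)}\tau(\chi)/(i^{\kappa}\sqrt q)=i^{-\kappa}\sqrt q$; the right-hand side of~\eqref{scatteringLambda} then collapses to precisely the displayed expression.

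Finally, the unitarity~\eqref{scatteringunitary} is transparent from the $\Lambda$-form. At $s=\tfrac12+it$ one has $2-2s=\overline{2s}$, so by the conjugation identity $\overline{\Lambda(\overline w,\chi)}=\Lambda(w,\overline\chi)$ --- itself immediate from $\overline{L(\overline w,\chi)}=L(w,\overline\chi)$ together with the realness of $(q/\pi)^{(w+\kappa)/2}\Gamma(\tfrac{w+\kappa}{2})$ under $w\mapsto\overline w$ --- one gets $\Lambda(2-2s,\chi)=\overline{\Lambda(2s,\overline\chi)}$, whence the $\Lambda$-ratio has modulus $1$. As $|\overline{\tau(\chi)}/q^{s}|=\sqrt q/\sqrt q=1$ there too, the product has modulus $1$, proving~\eqref{scatteringunitary}.
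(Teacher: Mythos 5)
Your proposal is correct and takes essentially the same route as the paper: unfold the constant term through the Bruhat decomposition \eqref{Bruhatinftybexplicit}, kill the cusps with $v>1$ by orthogonality of the (necessarily nonprincipal, by primitivity) component $\chi_v$, evaluate $\sum_{c}\varphi(c)\overline{\chi}(c)c^{-2s}=L(2s-1,\overline{\chi})/L(2s,\overline{\chi})$ when $\bb\sim 1$, and finish with the functional equation and $\overline{\Lambda(s,\chi)}=\Lambda(\overline{s},\overline{\chi})$. The only cosmetic difference is in the $d$-sum, where you argue via CRT that reduction modulo $v$ is equidistributed with uniform multiplicity, while the paper makes the explicit substitution $d=\overline{v}c+wd'$ to reduce to a character sum over $d'$ modulo $c$; both amount to the same vanishing statement.
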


\begin{proof}
The fact that $\varphi_{\infty \bb}(s,\chi) = 0$ when $\bb$ is the cusp at infinity follows from \hyperref[Fouriercoeffsprop]{Proposition \ref*{Fouriercoeffsprop}}. For the entries of the scattering matrix at other cusps, we use \eqref{cocycle} to write
\[E_{\aa}\left(\sigma_{\bb} z, s, \chi\right) = j_{\sigma_{\bb}}(z)^{\kappa} \sum_{\gamma \in \Gamma_{\infty} \backslash \sigma_{\aa}^{-1} \Gamma_0(q) \sigma_{\bb}} \overline{\chi}(\sigma_{\aa} \gamma \sigma_{\bb}^{-1}) j_{\gamma}(z)^{-\kappa} \Im(\gamma z)^s.\]
The singular cusp $\bb$ is equivalent to $1/v$ for some divisor $v$ of $q$ with $v < q$, $vw = q$, and $(v,w) = 1$. Given a matrix
\[\gamma = \begin{pmatrix} a \sqrt{w} & \dfrac{b}{\sqrt{w}} \\ c\sqrt{w} & \dfrac{d}{\sqrt{w}} \end{pmatrix}\]
in $\sigma_{\infty}^{-1} \Gamma_0(q) \sigma_{\bb}$ as in \eqref{Bruhatinftyb}, we have that
\[\sigma_{\infty} \gamma \sigma_{\bb}^{-1} 
%= \begin{pmatrix} a \sqrt{w} & \dfrac{b}{\sqrt{w}} \\ c\sqrt{w} & \dfrac{d}{\sqrt{w}} \end{pmatrix} \begin{pmatrix} \dfrac{1}{\sqrt{w}} & 0 \\ -v\sqrt{w} & \sqrt{w} \end{pmatrix} 
= \begin{pmatrix} a - bv & b	\\ c - dv & d \end{pmatrix},\]
and so as $d \equiv \frac{c}{v} \pmod{w}$,
\[\overline{\chi}\left(\sigma_{\infty} \gamma \sigma_{\bb}^{-1}\right) = \overline{\chi_v}(d) \overline{\chi_w}\left(\frac{c}{v}\right),\]
where we have decomposed the primitive character $\chi$ modulo $q$ into the product of primitive characters $\chi_v$ modulo $v$ and $\chi_w$ modulo $w$. From this and \eqref{Bruhatinftybexplicit}, we see that $j_{\sigma_{\bb}}(z)^{-\kappa} E_{\infty}\left(\sigma_{\bb} z,s,\chi\right)$ is equal to
\begin{multline*}
\sum_{\substack{c = 1 \\ (c,w) = 1 \\ c \equiv 0 \hspace{-.2cm} \pmod{v}}}^{\infty} \overline{\chi_w}\left(\frac{c}{v}\right) \sum_{\substack{d \hspace{-.2cm} \pmod{cw} \\ (cw,d) = 1 \\ d \equiv \frac{c}{v} \hspace{-.2cm} \pmod{w}}} \overline{\chi_v}(d)	\\
\times \sum_{n = -\infty}^{\infty} \left(\frac{c(z + n)\sqrt{w} + \dfrac{d}{\sqrt{w}}}{\left|c(z + n)\sqrt{w} + \dfrac{d}{\sqrt{w}}\right|}\right)^{-\kappa} \frac{y^s}{\left|c(z + n)\sqrt{w} + \dfrac{d}{\sqrt{w}}\right|^{2s}},
\end{multline*}
and so integrating from $0$ to $1$ with respect to $x$, making the change of variables $x \mapsto yt - \frac{d}{cw}$, and dividing by $y^{1 - s}$ yields
\[\varphi_{\infty \bb}(s,\chi) = \frac{1}{w^s} \int_{-\infty}^{\infty} \left(\frac{t + i}{|t + i|}\right)^{-\kappa} \frac{1}{|t + i|^{2s}} \, dt \sum_{\substack{c = 1 \\ (c,w) = 1 \\ c \equiv 0 \hspace{-.2cm} \pmod{v}}}^{\infty} \frac{\overline{\chi_w}\left(\frac{c}{v}\right)}{c^{2s}} \sum_{\substack{d \hspace{-.2cm} \pmod{cw} \\ (cw,d) = 1 \\ d \equiv \frac{c}{v} \hspace{-.2cm} \pmod{w}}} \overline{\chi_v}(d).\]
From \cite[(8.381.1)]{Gradshteyn}, the integral is equal to
\[i^{-\kappa} \sqrt{\pi} \frac{\Gamma\left(\frac{1}{2}(2s - 1 + \kappa)\right)}{\Gamma\left(\frac{1}{2}(2s + \kappa)\right)}.\]
To evaluate the sum over $d$, we write $d = \overline{v} c + w d'$, where $\overline{v} v \equiv 1 \pmod{w}$ and $(d',c) = 1$. This allows us to replace the sum over $d$ with a sum over $d'$ modulo $c$ with $(c,d') = 1$, so that
\[\sum_{\substack{d \hspace{-.2cm} \pmod{cw} \\ (cw,d) = 1 \\ d \equiv \frac{c}{v} \hspace{-.2cm} \pmod{w}}} \overline{\chi_v}(d) = \overline{\chi_v}(w) \sum_{\substack{d' \hspace{-.2cm} \pmod{c} \\ (c,d') = 1}} \overline{\chi_v}(d')\]
by the fact that $c \equiv 0 \pmod{v}$.

If $\overline{\chi_v}$ is nonprincipal, this sum vanishes, and as $\chi$ is a primitive character, $\overline{\chi_v}$ can only be the principal character if $v = 1$; consequently, $\varphi_{\infty \bb}(s,\chi)$ vanishes if $\bb$ is inequivalent to the cusp at $1$.

If $\bb \sim 1$, so that $v = 1$ and $w = q$, then this sum over $d'$ is merely $\varphi(c)$, and so
\[\sum_{\substack{c = 1 \\ (c,w) = 1 \\ c \equiv 0 \hspace{-.2cm} \pmod{v}}}^{\infty} \frac{\overline{\chi_w}\left(\frac{c}{v}\right)}{c^{2s}} \sum_{\substack{d \hspace{-.2cm} \pmod{cw} \\ (cw,d) = 1 \\ d \equiv \frac{c}{v} \hspace{-.2cm} \pmod{w}}} \overline{\chi_v}(d) = \sum_{c = 1}^{\infty} \frac{\varphi(c) \overline{\chi}(c)}{c^{2s}} = \frac{L(2s - 1, \overline{\chi})}{L(2s,\overline{\chi})}.\]
Using the definition of the completed Dirichlet $L$-function together with the fact that it satisfies the functional equation
\[\Lambda(s,\chi) = \frac{\tau(\chi)}{i^{\kappa} \sqrt{q}} \Lambda(1 - s,\overline{\chi}),\]
we see that we may write
\[\varphi_{\infty 1}(s,\chi) = \frac{i^{-\kappa}}{q^{s - \frac{1}{2}}} \frac{\Lambda(2s - 1,\overline{\chi})}{\Lambda(2s,\overline{\chi})} = \frac{\overline{\tau(\chi)}}{q^s} \frac{\Lambda(2 - 2s,\chi)}{\Lambda(2s,\overline{\chi})}.\]
As $\overline{\Lambda(s,\chi)} = \Lambda(\overline{s},\overline{\chi})$ and $|\tau(\chi)| = \sqrt{q}$, the result follows.
\end{proof}

\section{\texorpdfstring{Maa\ss{}--Selberg Relation}{Maa\80\337{}\9040\023{}Selberg Relation}}\label{MSrelsect}

For $z \in \Hb$ and $T \geq 1$, we define the truncated Eisenstein series
\begin{equation}\label{truncatedEisen}
\Lambda^T E_{\aa}(z,s,\chi) \defeq E_{\aa}(z,s,\chi) - \sum_{\cc} \sum_{\substack{\gamma \in \Gamma_{\cc} \backslash \Gamma_0(q) \\ \Im(\sigma_{\cc}^{-1} \gamma z) > T}} \overline{\chi}(\gamma) j_{\sigma_{\cc}^{-1} \gamma}(z)^{-\kappa} c_{\aa\cc}(\sigma_{\cc}^{-1} \gamma z,s,\chi),
\end{equation}
where the summation over $\cc$ is over all singular cusps of $\Gamma_0(q) \backslash \Hb$. It is not difficult to see that $\Lambda^T E_{\aa}(z,s,\chi)$ satisfies the automorphy condition
\begin{equation}\label{truncatedautomorphy}
\Lambda^T E_{\aa}\left(\gamma z, s, \chi\right) = \chi(\gamma) j_{\gamma}(z)^{\kappa} \Lambda^T E_{\aa}\left(z, s, \chi\right)
\end{equation}
for any $\gamma \in \Gamma_0(q)$. We will show that, unlike $E_{\aa}(z,s,\chi)$, the function $\Lambda^T E_{\aa}(z,s,\chi)$ is square-integrable on $\Gamma_0(q) \backslash \Hb$, and give an explicit expression for the resulting integral.

\begin{lemma}\label{ImzImgammaz}
Let $\bb$ and $\cc$ be singular cusps of $\Gamma_0(q) \backslash \Hb$, and let $\gamma \in \sigma_{\cc}^{-1} \Gamma_0(q) \sigma_{\bb}$. Then for any $z = x + iy \in \Hb$, we have that $\Im(z) \Im(\gamma z) \leq 1$ if $\bb$ and $\cc$ are inequivalent or if $\bb$ and $\cc$ are equivalent but $\gamma \notin \Gamma_{\infty} \omega_{\infty}$. If $\bb$ and $\cc$ are equivalent and $\gamma \in \Gamma_{\infty} \omega_{\infty}$, then $\Im(\gamma z) = \Im(z)$.
\end{lemma}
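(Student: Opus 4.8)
The plan is to reduce the whole statement to a single estimate on the lower-left entry of $\gamma$ regarded as an element of $\SL_2(\R)$. Writing $\gamma = \left(\begin{smallmatrix} A & B \\ C & D\end{smallmatrix}\right)$, the standard identity $\Im(\gamma z) = \Im(z)/|Cz + D|^2$ gives
\[
\Im(z)\,\Im(\gamma z) = \frac{y^2}{|Cz + D|^2} = \frac{y^2}{(Cx + D)^2 + C^2 y^2},
\]
so that, whenever $C \neq 0$, this quantity is at most $y^2/(C^2 y^2) = 1/C^2$, a bound independent of $z$. Thus the lemma reduces to two assertions: first, that $C = 0$ occurs precisely in the exceptional case $\bb \sim \cc$ with $\gamma \in \Gamma_\infty \omega_\infty$, where $\gamma$ acts as a translation; and second, that $|C| \geq 1$ in every other case.

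The crux is the second assertion, and I would establish it by computing $C$ directly from the scaling matrices. Writing each singular cusp $\aa$ with scaling matrix $\sigma_\aa = \left(\begin{smallmatrix}\sqrt{w_\aa} & 0 \\ v_\aa\sqrt{w_\aa} & 1/\sqrt{w_\aa}\end{smallmatrix}\right)$, where $v_\aa = 0$ and $w_\aa = 1$ for $\aa \sim \infty$ so that $\sigma_\infty$ is the identity, a direct multiplication of $\sigma_\cc^{-1}\,\delta\,\sigma_\bb$ for the underlying $\delta = \left(\begin{smallmatrix} a & b \\ c & d\end{smallmatrix}\right) \in \Gamma_0(q)$ shows that
\[
C = \sqrt{w_\cc w_\bb}\,\bigl(c + d v_\bb - a v_\cc - b v_\bb v_\cc\bigr).
\]
Since $a,b,c,d,v_\bb,v_\cc$ are all integers, the factor $K \defeq c + d v_\bb - a v_\cc - b v_\bb v_\cc$ is an integer with $C = 0$ if and only if $K = 0$. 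Hence in every non-exceptional case $K \neq 0$, so $|K| \geq 1$ and therefore $|C| \geq \sqrt{w_\cc w_\bb} \geq 1$, as both widths are positive integers.

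It remains to treat the case $C = 0$ and to confirm when it arises. Here $C = 0$ is equivalent to $\gamma \infty = \infty$, which by $\sigma_\cc \infty = \cc$ and $\sigma_\bb \infty = \bb$ forces $\delta \bb = \cc$, and hence can occur only when $\bb \sim \cc$; in particular, if $\bb$ and $\cc$ are inequivalent then $C \neq 0$ always, and the previous paragraph gives $\Im(z)\,\Im(\gamma z) \leq 1/C^2 \leq 1$. When $\bb \sim \cc$, the elements of $\sigma_\cc^{-1}\Gamma_0(q)\sigma_\bb$ fixing $\infty$ are exactly the diagonal Bruhat piece $\Omega_\infty = \Gamma_\infty \omega_\infty$, every element of which has the form $\pm\left(\begin{smallmatrix}1 & n \\ 0 & 1\end{smallmatrix}\right)$; for such $\gamma$ one has $D = \pm 1$, so $\Im(\gamma z) = y/D^2 = \Im(z)$, while for $\gamma \notin \Gamma_\infty \omega_\infty$ we again have $C \neq 0$ and $\Im(z)\,\Im(\gamma z) \leq 1$.

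The main obstacle is the explicit evaluation of $C$ in the second paragraph: once the matrix product is carried out it is immediate, but it is precisely here that the specific normalisation of the scaling matrices is used in an essential way. As a consistency check, specialising to $\cc \sim \infty$ gives $v_\cc = 0$ and $K \equiv c \equiv 0 \pmod{v_\bb}$, whence $|C| \geq v_\bb \sqrt{w_\bb} = \sqrt{v_\bb q}$, recovering (and in fact improving) the lower bound on the lower-left entries visible in the explicit Bruhat decomposition \eqref{Bruhatinftybexplicit}.
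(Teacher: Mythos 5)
Your proof is correct and takes essentially the same route as the paper's: both compute the lower-left entry of $\sigma_{\cc}^{-1} \delta \sigma_{\bb}$ explicitly as $\sqrt{w_{\cc} w_{\bb}}$ times an integer, invoke the Bruhat decomposition to conclude that this entry vanishes only when $\bb \sim \cc$ and $\gamma \in \Gamma_{\infty} \omega_{\infty}$ (where $\gamma$ is a translation), and otherwise bound $\Im(z) \Im(\gamma z)$ by the reciprocal of the square of that entry. The only difference is cosmetic: your convention $v_{\infty} = 0$, $w_{\infty} = 1$ treats the cusp at infinity uniformly with the others, whereas the paper handles it as a separate ``similar but simpler'' case.
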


\begin{proof}
We deal with the cases where neither $\bb$ nor $\cc$ are equivalent to the cusp at infinity; when $\bb \sim \infty$ or $\cc \sim \infty$, the proof is similar but simpler. Let $\bb \sim 1/v$ and $\cc \sim 1/v'$, $1 \leq v,v' < q$, with $w,w'$ such that $vw = v'w' = q$. For $\left(\begin{smallmatrix} a & b \\ c & d \end{smallmatrix}\right) \in \Gamma_0(q)$, we have that
\[\sigma_{\cc}^{-1} \begin{pmatrix} a & b \\ c & d \end{pmatrix} \sigma_{\bb} %= \begin{pmatrix} \dfrac{1}{\sqrt{w'}} & 0 \\ -v'\sqrt{w'} & \sqrt{w'} \end{pmatrix} \begin{pmatrix} a & b \\ c & d \end{pmatrix} \begin{pmatrix} \sqrt{w} & 0 \\ v\sqrt{w} & \dfrac{1}{\sqrt{w}} \end{pmatrix}	\\
= \begin{pmatrix} (a + bv) \sqrt{\dfrac{w}{w'}} & \dfrac{b}{\sqrt{w'w}}	\\ (c - av' + dv - bv'v) \sqrt{w'w} & (d - bv') \sqrt{\dfrac{w'}{w}} \end{pmatrix}.\]
So for
\[\gamma = \begin{pmatrix} * & * \\ C \sqrt{w'w} & D \sqrt{\dfrac{w'}{w}} \end{pmatrix} \in \sigma_{\cc}^{-1} \Gamma_0(q) \sigma_{\bb},\]
where $C = c - av' + dv - bv'v$ and $D = d - bv'$ are integers, we have that
\[\Im(\gamma z) = \frac{1}{w'w} \frac{y}{(C x + Dw^{-1})^2 + C^2 y^2}.\]
By the Bruhat decomposition, if $\bb$ and $\cc$ are inequivalent, then $C \sqrt{w'w}$ must be nonzero, and so $C^2 \geq 1$. In particular, if $\bb$ and $\cc$ are inequivalent, then
\[\Im(z) \Im(\gamma z) \leq \frac{1}{w'w} \leq 1.\]
If $\bb$ and $\cc$ are equivalent and $\gamma \notin \Gamma_{\infty} \omega_{\infty}$, then again $C \sqrt{w'w} \neq 0$, and the same result holds. Finally, if $\bb$ and $\cc$ are equivalent and $\gamma \in \Gamma_{\infty} \omega_{\infty}$, then it is clear that $\Im(\gamma z) = \Im(z)$.
\end{proof}

\begin{corollary}\label{LambdaTcuspzone}
If $\Im(z) > T \geq 1$, then for any singular cusp $\bb$, we have that
\[\Lambda^T E_{\aa}(\sigma_{\bb} z,s,\chi) = E_{\aa}(\sigma_{\bb} z,s,\chi) - j_{\sigma_{\bb}}(z)^{\kappa} c_{\aa\bb}(z,s,\chi).\]
\end{corollary}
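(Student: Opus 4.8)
The plan is to substitute $z \mapsto \sigma_{\bb} z$ directly into the defining expression \eqref{truncatedEisen} for $\Lambda^T E_{\aa}$ and to show that, under the hypothesis $\Im(z) > T \geq 1$, the entire double sum collapses to a single explicit term. After the substitution, the inner sum is over cosets $\gamma \in \Gamma_{\cc}\backslash\Gamma_0(q)$ constrained by $\Im(\sigma_{\cc}^{-1}\gamma\sigma_{\bb}z) > T$, so I would introduce $\delta \defeq \sigma_{\cc}^{-1}\gamma\sigma_{\bb} \in \sigma_{\cc}^{-1}\Gamma_0(q)\sigma_{\bb}$ and feed each such $\delta$ into \hyperref[ImzImgammaz]{Lemma \ref*{ImzImgammaz}}.

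The decisive step is the elimination of almost all terms. If $\cc$ and $\bb$ are inequivalent, or if they are equivalent but $\delta \notin \Gamma_{\infty}\omega_{\infty}$, the lemma gives $\Im(z)\Im(\delta z) \leq 1$, whence $\Im(\delta z) \leq 1/\Im(z) < 1/T \leq T$ as $T \geq 1$; thus the truncation condition $\Im(\delta z) > T$ fails and the term vanishes. Hence only $\cc \sim \bb$ --- so $\cc = \bb$, the chosen representative of its class --- together with $\delta \in \Gamma_{\infty}\omega_{\infty}$ can survive, and for these the lemma gives $\Im(\delta z) = \Im(z) > T$, confirming that they do contribute.

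It then remains to identify and evaluate this surviving contribution. The requirement $\sigma_{\bb}^{-1}\gamma\sigma_{\bb} \in \Gamma_{\infty}\omega_{\infty}$ singles out the unique coset $\Gamma_{\bb}\,(\sigma_{\bb}\omega_{\infty}\sigma_{\bb}^{-1})$ in $\Gamma_{\bb}\backslash\Gamma_0(q)$, so exactly one term remains, with representative $\gamma_0 \defeq \sigma_{\bb}\omega_{\infty}\sigma_{\bb}^{-1}$. For this term I would verify three points. First, $\sigma_{\bb}^{-1}\gamma_0\sigma_{\bb} = \omega_{\infty}$ acts on $z$ as a real translation, so since $c_{\aa\bb}(\,\cdot\,,s,\chi) = \delta_{\aa\bb}y^s + \varphi_{\aa\bb}(s,\chi)y^{1-s}$ depends only on $\Im(z)$, we have $c_{\aa\bb}(\sigma_{\bb}^{-1}\gamma_0\sigma_{\bb}z,s,\chi) = c_{\aa\bb}(z,s,\chi)$. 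Second, $\overline{\chi}(\gamma_0) = 1$: since $\bb$ is singular we have $\chi(\gamma_{\bb}) = 1$, and $\omega_{\infty}$ being a power of $\left(\begin{smallmatrix}1&1\\0&1\end{smallmatrix}\right)$ makes $\gamma_0$ the corresponding power of $\gamma_{\bb}$. Third, applying the cocycle relation \eqref{cocycle} to $\sigma_{\bb}^{-1}\sigma_{\bb} = I$ and to $\sigma_{\bb}^{-1}\gamma_0 = \omega_{\infty}\sigma_{\bb}^{-1}$ --- using $j_{\omega_{\infty}}(z) = 1$ because $\omega_{\infty}$ is upper triangular --- gives $j_{\sigma_{\bb}^{-1}\gamma_0}(\sigma_{\bb}z)^{-\kappa} = j_{\sigma_{\bb}^{-1}}(\sigma_{\bb}z)^{-\kappa} = j_{\sigma_{\bb}}(z)^{\kappa}$. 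Multiplying the three factors reproduces $j_{\sigma_{\bb}}(z)^{\kappa}c_{\aa\bb}(z,s,\chi)$, which is exactly the quantity subtracted in the claimed identity.

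I expect the main obstacle to be organisational rather than analytic: one must take care that the outer sum over $\cc$ genuinely ranges over equivalence-class representatives (so that $\cc \sim \bb$ forces $\cc = \bb$) and that the surviving set of $\gamma$ is a single coset, so that no stray term slips past the truncation. The analytic heart --- the sharp dichotomy between $\Im(\delta z) \leq 1/\Im(z)$ and $\Im(\delta z) = \Im(z)$ --- is furnished entirely by \hyperref[ImzImgammaz]{Lemma \ref*{ImzImgammaz}}, so once the indexing is handled the remaining computation is routine.
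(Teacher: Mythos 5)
Your proposal is correct and takes essentially the same route as the paper: both substitute $\sigma_{\bb}z$ into \eqref{truncatedEisen}, transfer the truncation condition to $\delta = \sigma_{\cc}^{-1}\gamma\sigma_{\bb}$ viewed in $\Gamma_{\infty}\backslash\sigma_{\cc}^{-1}\Gamma_0(q)\sigma_{\bb}$, and invoke \hyperref[ImzImgammaz]{Lemma \ref*{ImzImgammaz}} to eliminate every term except the one with $\cc \sim \bb$ and $\delta \in \Gamma_{\infty}\omega_{\infty}$. Your explicit verification of the surviving term (the character value $\overline{\chi}(\gamma_0) = 1$, the translation-invariance of $c_{\aa\bb}$, and the $j$-factor computation) is precisely what the paper compresses into its opening appeal to the cocycle relation \eqref{cocycle}.
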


\begin{proof}
From the definition of $\Lambda^T E_{\aa}(z,s,\chi)$ and \eqref{cocycle}, we must show that for any singular cusp $\cc$ and $\gamma \in \Gamma_{\cc} \backslash \Gamma_0(q)$ that the inequalities $\Im(z) > T$ and $\Im(\sigma_{\cc}^{-1} \gamma \sigma_{\bb} z) > T$ are simultaneously satisfied only when $\cc \sim \bb$ and $\gamma = \omega_{\infty}$. This is equivalent to showing that if $\gamma \in \Gamma_{\infty} \backslash \sigma_{\cc}^{-1} \Gamma_0(q) \sigma_{\bb}$ is such that $\Im(z) > T$ and $\Im(\gamma z) > T$, then $\cc \sim \bb$ and $\gamma = \omega_{\infty}$, which follows immediately from \hyperref[ImzImgammaz]{Lemma \ref*{ImzImgammaz}}.
\end{proof}

With these results in hand, we can prove the following Maa\ss{}--Selberg relation.

\begin{proposition}
For any two singular cusps $\aa,\bb$, $T \geq 1$, and $s \neq \overline{r}$, $s + \overline{r} \neq 1$,
\begin{multline*}
\int_{\Gamma_0(q) \backslash \Hb} \Lambda^T E_{\aa}(z,s,\chi) \overline{\Lambda^T E_{\bb}(z,r,\chi)} \, d\mu(z)	\\
= \overline{\varphi_{\bb\aa}(r,\chi)} \frac{T^{s - \overline{r}}}{s - \overline{r}} + \varphi_{\aa\bb}(s,\chi) \frac{T^{\overline{r} - s}}{\overline{r} - s} + \delta_{\aa \bb} \frac{T^{s + \overline{r} - 1}}{s + \overline{r} - 1}	\\
+ \sum_{\cc} \varphi_{\aa\cc}(s,\chi) \overline{\varphi_{\bb\cc}(r,\chi)} \frac{T^{1 - s - \overline{r}}}{1 - s - \overline{r}},
\end{multline*}
where the sum is over singular cusps $\cc$. Here $d\mu(z) = \dfrac{dx \, dy}{y^2}$ is the $\SL_2(\R)$-invariant measure on $\Hb$.
\end{proposition}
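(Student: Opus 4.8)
The plan is to prove the identity by applying Green's theorem to a truncation of $\Gamma_0(q) \backslash \Hb$, exploiting that $E_{\aa}(z,s,\chi)$ and $E_{\bb}(z,r,\chi)$ are eigenfunctions of the weight-$\kappa$ Laplacian $\Delta_{\kappa} = -y^2(\partial_x^2 + \partial_y^2) + i\kappa y\, \partial_x$ with eigenvalues $s(1-s)$ and $r(1-r)$ respectively. Since $\overline{E_{\bb}(z,r,\chi)}$ then satisfies $\Delta_{-\kappa} \overline{E_{\bb}} = \overline{r}(1-\overline{r}) \overline{E_{\bb}}$, the combination $\overline{E_{\bb}}\, \Delta_{\kappa} E_{\aa} - E_{\aa}\, \Delta_{-\kappa} \overline{E_{\bb}}$ equals $(s - \overline{r})(1 - s - \overline{r}) E_{\aa} \overline{E_{\bb}}$. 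The product $E_{\aa}\overline{E_{\bb}}$ is $\Gamma_0(q)$-invariant, because the automorphy factor $\chi(\gamma) j_{\gamma}(z)^{\kappa}$ from \eqref{automorphy} cancels against its conjugate (as $|\chi(\gamma)| = |j_{\gamma}(z)| = 1$), so this is a genuine identity of integrable invariant functions.

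First I would fix $T \geq 1$ and decompose a fundamental domain as a compact core $\FF^T$ together with the cuspidal sectors $\{\Im(\sigma_{\cc}^{-1} z) > T\}$ over the singular cusps $\cc$; by \hyperref[ImzImgammaz]{Lemma \ref*{ImzImgammaz}} these sectors are pairwise disjoint and each is isometric to $\{0 \leq x < 1,\ y > T\}$, so $\FF^T$ is compact. On $\FF^T$ one has $\Lambda^T E_{\aa} = E_{\aa}$ and $\Lambda^T E_{\bb} = E_{\bb}$, while on each sector \hyperref[LambdaTcuspzone]{Corollary \ref*{LambdaTcuspzone}} shows that $\Lambda^T E_{\aa}$ equals the non-constant part of $E_{\aa}$ at $\cc$, which decays exponentially. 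Hence the integral over $\Gamma_0(q) \backslash \Hb$ splits as the integral of $E_{\aa}\overline{E_{\bb}}$ over $\FF^T$ plus the integrals of the products of non-constant parts over the sectors. I would then apply Green's theorem twice: once to $E_{\aa}$ and $E_{\bb}$ on $\FF^T$, whose boundary is the union of horocycles $\Im(\sigma_{\cc}^{-1} z) = T$, and once to the non-constant parts on each sector (where the contribution at $y = \infty$ vanishes by exponential decay). In the weight-$\kappa$ setting the term $i\kappa y\, \partial_x$ contributes only $\partial_x(E_{\aa}\overline{E_{\bb}})$, whose integral over a full period in $x$ vanishes, so Green's identity reduces to the weight-$0$ boundary form $(\overline{v}\, \partial_y u - u\, \partial_y \overline{v})\, dx$ on each horocycle.

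The key point is that, by Parseval in the $x$-variable, the boundary integrals decompose over Fourier modes, and the non-constant modes arising from $\FF^T$ are cancelled exactly by those from the sectors; what survives is the $m = 0$ contribution built from the constant terms $c_{\aa\cc}(z,s,\chi) = \delta_{\aa\cc} y^s + \varphi_{\aa\cc}(s,\chi) y^{1-s}$ and $c_{\bb\cc}(z,r,\chi)$. After dividing by $(s - \overline{r})(1 - s - \overline{r})$, a direct computation of the Wronskian-type expression $\overline{v_0}\, \partial_y u_0 - u_0\, \partial_y \overline{v_0}$ at $y = T$, summed over $\cc$ and simplified using $\delta_{\aa\cc}\delta_{\bb\cc} = \delta_{\aa\bb}$, yields precisely the four stated terms. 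I expect the main obstacle to be bookkeeping rather than conceptual: verifying that the non-constant Fourier modes cancel between core and sectors (so that no spectral sum survives), and fixing the orientation and sign conventions in Green's theorem so that the four terms emerge correctly. Finally, since all quantities continue meromorphically and $\Lambda^T E_{\aa}(\cdot,s,\chi)$ is square-integrable wherever $E_{\aa}(\cdot,s,\chi)$ is holomorphic, the identity---proven first where the integrals converge classically---extends to all $s \neq \overline{r}$ with $s + \overline{r} \neq 1$ by meromorphic continuation.
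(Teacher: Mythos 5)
Your strategy is genuinely different from the paper's. The paper proves the proposition by unfolding in the style of Arthur: it uses the automorphy \eqref{truncatedautomorphy} of the truncated series to show that $\overline{\Lambda^T E_{\bb}}$ may be replaced by $\overline{E_{\bb}}$ inside the inner product, and then unfolds the remaining integral against the coset decomposition; no Laplacian appears. Your route is the classical Green's theorem argument, which the paper itself flags as an alternative in the Remark closing Section \ref{MSrelsect}, following \cite[Proposition 6.8]{Iwaniec}. It has a real advantage: the unfolding proof must start in the range $\Re(s), \Re(r) > 1$, $\Re(s) - \Re(r) > 1$ and continue analytically, whereas yours applies directly wherever the Eisenstein series are regular. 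Your central computation is also correct: writing $u_0 = \delta_{\aa\cc} y^s + \varphi_{\aa\cc}(s,\chi) y^{1-s}$ and $\overline{v_0} = \delta_{\bb\cc} y^{\overline{r}} + \overline{\varphi_{\bb\cc}(r,\chi)} y^{1-\overline{r}}$, the Wronskian $\overline{v_0}\,\partial_y u_0 - u_0\,\partial_y \overline{v_0}$ at $y = T$, summed over $\cc$ and divided by $(s-\overline{r})(1-s-\overline{r})$, produces exactly the four stated terms, and the nonconstant Fourier modes do cancel between the core side and the sector side of each horocycle, since truncation alters only the constant term.

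Two of your steps, however, are not bookkeeping, and as written they fail. The boundary of the core is not ``the union of horocycles $\Im(\sigma_{\cc}^{-1} z) = T$'': it also contains the sides of the fundamental domain $\FF$, identified in pairs by elements of $\Gamma_0(q)$. For $\kappa = 0$ those contributions cancel because the Green boundary form $\left(\overline{v}\,\partial_y u - u\,\partial_y \overline{v}\right) dx - \left(\overline{v}\,\partial_x u - u\,\partial_x \overline{v}\right) dy$ built from invariant functions is itself $\Gamma_0(q)$-invariant; for $\kappa = 1$ it is not, because $u = E_{\aa}$ and $\overline{v} = \overline{E_{\bb}}$ transform with the factors $j_\gamma(z)^{\kappa}$ and $j_\gamma(z)^{-\kappa}$, and pulling the form back by $\gamma$ creates a defect proportional to $u\overline{v}\, d\arg(cz+d)$. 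The cross term you propose to discard is precisely what repairs this: by Stokes, $i\kappa\,\partial_x(u\overline{v})\, \frac{dx\,dy}{y} = d\left(i\kappa\, u\overline{v}\,\frac{dy}{y}\right)$, and the boundary form $i\kappa\, u\overline{v}\, \frac{dy}{y}$ has a non-invariance proportional to $u\overline{v}\, d\log|cz+d|$ which cancels the Wronskian defect via the Cauchy--Riemann relation $\ast\, d\arg(cz+d) = -\,d\log|cz+d|$. So the argument ``the $i\kappa y\,\partial_x$ term integrates to zero over a full period in $x$'' is valid only on the cuspidal sectors, which are full strips; on the core you must carry the corrected, invariant Green form through the side identifications, and only on the horocycles (where $dy = 0$ in the local coordinate $\sigma_{\cc}^{-1}z$) does it collapse to the weight-zero Wronskian you compute.

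Separately, your core $\FF^T$ is compact only if every cusp of $\Gamma_0(q)$ is singular for $\chi$, which happens exactly when $q$ is squarefree. In general there are non-singular cusps, at which no truncation is performed, and the core is unbounded there. The repair is standard: at a non-singular cusp $\cc$ the expansion of $j_{\sigma_{\cc}}(z)^{-\kappa} E_{\aa}(\sigma_{\cc} z, s, \chi)$ runs over frequencies $m + \theta$ with $\theta \notin \Z$, so there is no constant term and the function decays exponentially as $\Im(z) \to \infty$; hence the integral converges near such cusps and their boundary contribution vanishes (equivalently, truncate there as well and note that the constant-term Wronskian is identically zero). With these two repairs your argument is sound and yields the stated identity.
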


\begin{proof}
We initially assume that $\Re(s), \Re(r) > 1$ with $\Re(s) - \Re(r) > 1$; the identity then extends to all $s,r \in \C$ with $s \neq \overline{r}$ and $s + \overline{r} \neq 1$ by analytic continuation.

We first show that
\[\int_{\Gamma_0(q) \backslash \Hb} \Lambda^T E_{\aa}(z,s,\chi) \left(\overline{\Lambda^T E_{\bb}(z,r,\chi)} - \overline{E_{\bb}(z,r,\chi)}\right) \, d\mu(z) = 0.\]
Indeed, the left-hand side is equal to
\[\sum_{\cc} \int_{\Gamma_0(q) \backslash \Hb} \Lambda^T E_{\aa}(z,s,\chi) \sum_{\substack{\gamma \in \Gamma_{\cc} \backslash \Gamma_0(q) \\ \Im(\sigma_{\cc}^{-1} \gamma z) > T}} \chi(\gamma) \overline{j_{\sigma_{\cc}^{-1} \gamma}(z)^{-\kappa} c_{\bb\cc}(\sigma_{\cc}^{-1} \gamma z,r,\chi)} \, d\mu(z),\]
which, by \eqref{cocycle} and \eqref{truncatedautomorphy}, is equal to
\[- \sum_{\cc} \int_{\Gamma_0(q) \backslash \Hb} \sum_{\substack{\gamma \in \Gamma_{\cc} \backslash \Gamma_0(q) \\ \Im(\sigma_{\cc}^{-1} \gamma z) > T}} \overline{c_{\bb\cc}(\sigma_{\cc}^{-1} \gamma z,r,\chi)} j_{\sigma_{\cc}}(\sigma_{\cc}^{-1} \gamma z)^{-\kappa} \Lambda^T E_{\aa}(\gamma z,s,\chi) \, d\mu(z),\]
and this integral can be unfolded to yield
\[- \sum_{\cc} \int_{T}^{\infty} \int_{0}^{1} \overline{c_{\bb\cc}(z,r,\chi)} j_{\sigma_{\cc}}(z)^{-\kappa} \Lambda^T E_{\aa}(\sigma_{\cc} z,s,\chi) \, \frac{dx \, dy}{y^2}.\]
But $\overline{c_{\bb\cc}(z,r,\chi)}$ is independent of $x$, while for $\Im(z) > T \geq 1$, the zeroeth Fourier coefficient of the function $j_{\sigma_{\cc}}(z)^{-\kappa} \Lambda^T E_{\aa}(\sigma_{\cc} z,s,\chi)$ vanishes via \hyperref[LambdaTcuspzone]{Corollary \ref*{LambdaTcuspzone}}, and so this vanishes. Consequently,
\[\int_{\Gamma_0(q) \backslash \Hb} \Lambda^T E_{\aa}(z,s,\chi) \overline{\Lambda^T E_{\bb}(z,r,\chi)} \, d\mu(z) = \int_{\Gamma_0(q) \backslash \Hb} \Lambda^T E_{\aa}(z,s,\chi) \overline{E_{\bb}(z,r,\chi)} \, d\mu(z).\]
The right-hand side can be written as
\begin{align*}
& \int_{\Gamma_0(q) \backslash \Hb} \left(\sum_{\gamma \in \Gamma_{\aa} \backslash \Gamma_0(q)} \overline{\chi}(\gamma) j_{\sigma_{\aa}^{-1} \gamma}(z)^{-\kappa} \Im(\sigma_{\aa}^{-1} \gamma z)^s \overline{E_{\bb}(z,r,\chi)}\right.	\\
& \hspace{2cm} \left. - \sum_{\cc} \sum_{\substack{\gamma \in \Gamma_{\cc} \backslash \Gamma_0(q) \\ \Im(\sigma_{\cc}^{-1} \gamma z) > T}} \overline{\chi}(\gamma) j_{\sigma_{\cc}^{-1} \gamma}(z)^{-\kappa} c_{\aa\cc}(\sigma_{\cc}^{-1} \gamma z,s,\chi) \overline{E_{\bb}(z,r,\chi)}\right) \, d\mu(z)	\\
& = \int_{\Gamma_0(q) \backslash \Hb} \sum_{\substack{\gamma \in \Gamma_{\aa} \backslash \Gamma_0(q) \\ \Im(\sigma_{\aa}^{-1} \gamma z) \leq T}} \overline{\chi}(\gamma) j_{\sigma_{\aa}^{-1} \gamma}(z)^{-\kappa} \Im(\sigma_{\aa}^{-1} \gamma z)^s \overline{E_{\bb}(z,r,\chi)} \, d\mu(z)	\\
& \qquad + \int_{\Gamma_0(q) \backslash \Hb} \sum_{\substack{\gamma \in \Gamma_{\aa} \backslash \Gamma_0(q) \\ \Im(\sigma_{\aa}^{-1} \gamma z) > T}} \overline{\chi}(\gamma) j_{\sigma_{\aa}^{-1} \gamma}(z)^{-\kappa} \varphi_{\aa\aa}(s,\chi) \Im(\sigma_{\aa}^{-1} \gamma z)^{1 - s} \overline{E_{\bb}(z,r,\chi)} \, d\mu(z)	\\
& \qquad - \sum_{\cc \neq \aa} \int_{\Gamma_0(q) \backslash \Hb} \sum_{\substack{\gamma \in \Gamma_{\cc} \backslash \Gamma_0(q) \\ \Im(\sigma_{\cc}^{-1} \gamma z) > T}} \overline{\chi}(\gamma) j_{\sigma_{\cc}^{-1} \gamma}(z)^{-\kappa} c_{\aa\cc}(\sigma_{\cc}^{-1} \gamma z,s,\chi) \overline{E_{\bb}(z,r,\chi)} \, d\mu(z).
\end{align*}
By \eqref{cocycle} and \eqref{automorphy}, the first term is
\[\int_{\Gamma_0(q) \backslash \Hb} \sum_{\substack{\gamma \in \Gamma_{\aa} \backslash \Gamma_0(q) \\ \Im(\sigma_{\aa}^{-1} \gamma z) \leq T}} \Im(\sigma_{\aa}^{-1} \gamma z)^s \overline{j_{\sigma_{\aa}}(\sigma_{\aa}^{-1} \gamma z)^{-\kappa}  E_{\bb}(\gamma z,r,\chi)} \, d\mu(z),\]
and upon unfolding the integral, this becomes
\begin{align*}
\int_{0}^{T} \int_{0}^{1} y^s \overline{j_{\sigma_{\aa}}(z)^{-\kappa} E_{\bb}(\sigma_{\aa} z,r,\chi)} \, \frac{dx \, dy}{y^2} & = \int_{0}^{T} y^s \overline{c_{\bb\aa}(z,r,\chi)} \, \frac{dy}{y^2}	\\
& = \delta_{\aa \bb} \frac{T^{s + \overline{r} - 1}}{s + \overline{r} - 1} + \overline{\varphi_{\bb\aa}(r,\chi)} \frac{T^{s - \overline{r}}}{s - \overline{r}}.
\end{align*}
Similarly, the second term is
\[\int_{T}^{\infty} \varphi_{\aa\aa}(s,\chi) y^{1 - s} \overline{c_{\bb\aa}(z,s,\chi)} \, \frac{dy}{y^2} = \delta_{\aa\bb} \varphi_{\aa\bb}(s,\chi) \frac{T^{\overline{r} - s}}{\overline{r} - s} + \varphi_{\aa\aa}(s,\chi) \overline{\varphi_{\bb\aa}(r,\chi)} \frac{T^{1 - s - \overline{r}}}{1 - s - \overline{r}},\]
and the third term is
\begin{multline*}
-\sum_{\cc \neq \aa} \int_{T}^{\infty}  c_{\aa\cc}(z,s,\chi) \overline{c_{\bb\cc}(z,r,\chi)} \, \frac{dy}{y^2}	\\
= (1 - \delta_{\aa\bb})\varphi_{\aa\bb}(s,\chi) \frac{T^{\overline{r} - s}}{\overline{r} - s} + \sum_{\cc \neq \aa} \varphi_{\aa\cc}(s,\chi) \overline{\varphi_{\bb\cc}(r,\chi)} \frac{T^{1 - s - \overline{r}}}{1 - s - \overline{r}}.
\end{multline*}
Combining these identities yields the result.
\end{proof}

\begin{corollary}
For $T \geq 1$ and $t \in \R$, we have that
\[\int_{\Gamma_0(q) \backslash \Hb} \left|\Lambda^T E_{\infty}\left(z,\frac{1}{2} + it,\chi\right)\right|^2 \, d\mu(z) = 2 \log T - \Re\left(\frac{\varphi_{\infty 1}'}{\varphi_{\infty 1}} \left(\frac{1}{2} + it,\chi\right)\right).\]
\end{corollary}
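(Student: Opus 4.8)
The plan is to specialize the Maa\ss{}--Selberg relation just proved to the diagonal case $\aa = \bb = \infty$ with $s = \frac{1}{2} + it$, and then to let $r \to s$. We work throughout with $q \geq 2$, the relevant range. Since the inner product $\int_{\Gamma_0(q)\backslash\Hb} \Lambda^T E_\infty(z,s,\chi)\overline{\Lambda^T E_\infty(z,r,\chi)}\,d\mu(z)$ depends holomorphically on $\bar r$ and reduces to the finite, nonnegative $L^2$-norm $\int |\Lambda^T E_\infty(z,\frac12+it,\chi)|^2\,d\mu(z)$ at $r=s$, the identity will follow once I show that the right-hand side of the Maa\ss{}--Selberg relation has a removable singularity at $r=s$ and compute its value there. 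Note that for $r$ close to but distinct from $s$ both constraints $s\neq\bar r$ and $s+\bar r\neq 1$ hold (the latter fails exactly at $r = s$), so the limit is approached from within the region of validity.

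First I would simplify the four terms on the right-hand side. By \hyperref[Fouriercoeffsprop]{Proposition \ref*{Fouriercoeffsprop}} we have $\varphi_{\infty\infty}(s,\chi) = 0$, which kills the first two terms outright; and since $\varphi_{\infty\cc}(\cdot,\chi)$ vanishes for every singular cusp $\cc\not\sim 1$, the sum over $\cc$ collapses to the single term $\cc\sim 1$. What remains is
\[
\frac{T^{s+\bar r - 1}}{s+\bar r - 1} + \varphi_{\infty 1}(s,\chi)\,\overline{\varphi_{\infty 1}(r,\chi)}\,\frac{T^{1-s-\bar r}}{1-s-\bar r},
\]
two terms each having a simple pole at $s+\bar r = 1$. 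Writing them over the common denominator $s+\bar r -1$ exhibits the expression as a single ratio $N(\bar r)/D(\bar r)$ with $D(\bar r) = s + \bar r - 1$.

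The crux is to resolve the resulting $0/0$ indeterminacy. Setting $\Re s = \frac12$ gives $s + \bar s = 1$, so $T^{s+\bar s - 1} = T^{1-s-\bar s} = 1$; combined with the unitarity relation \eqref{scatteringunitary}, namely $\varphi_{\infty 1}(s,\chi)\overline{\varphi_{\infty 1}(s,\chi)} = |\varphi_{\infty 1}(\tfrac12+it,\chi)|^2 = 1$, this shows $N(\bar s) = 1 - 1 = 0$, so the singularity is indeed removable. I would then evaluate the limit by L'H\^{o}pital's rule, differentiating with respect to $\bar r$ and treating $\overline{\varphi_{\infty 1}(r,\chi)}$ as a holomorphic function of $\bar r$, whose $\bar r$-derivative at $\bar r = \bar s$ is $\overline{\varphi_{\infty 1}'(s,\chi)}$. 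Differentiating $N$ produces a $\log T$ from $T^{s+\bar r - 1}$ and a second $\log T$ from $T^{1-s-\bar r}$, together with the term $-\varphi_{\infty 1}(s,\chi)\overline{\varphi_{\infty 1}'(s,\chi)}$; since $D'(\bar r)=1$, the limit equals
\[
2\log T - \varphi_{\infty 1}(s,\chi)\,\overline{\varphi_{\infty 1}'(s,\chi)}.
\]
Invoking $|\varphi_{\infty 1}(s,\chi)| = 1$ once more to rewrite $\varphi_{\infty 1}(s,\chi)\overline{\varphi_{\infty 1}'(s,\chi)} = \overline{(\varphi_{\infty 1}'/\varphi_{\infty 1})(s,\chi)}$, and then taking real parts --- legitimate because the left-hand side is a genuine $L^2$-norm and hence real --- converts the conjugate into $\Re(\varphi_{\infty 1}'/\varphi_{\infty 1})$ and yields the stated formula.

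The main obstacle is the cancellation of the two simple poles: everything hinges on the vanishing of $N(\bar s)$, which is precisely the content of the unitarity of the scattering matrix on the critical line. Once that is in hand the remaining computation is a routine L'H\^{o}pital limit, and the only bookkeeping to watch is the holomorphic-versus-antiholomorphic dependence on $r$, which is why the derivative is taken in $\bar r$.
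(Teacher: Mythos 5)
Your proof is correct and takes essentially the same approach as the paper: both specialize the Maa\ss{}--Selberg relation to $\aa = \bb = \infty$, use the vanishing of $\varphi_{\infty\infty}$ and of $\varphi_{\infty\cc}$ for $\cc \not\sim 1$ together with the unitarity $\left|\varphi_{\infty 1}\left(\frac{1}{2}+it,\chi\right)\right| = 1$ to exhibit a removable singularity on the diagonal, and then evaluate the limit there. The only difference is cosmetic: the paper approaches the diagonal along $s = r = \frac{1}{2} + it + \varepsilon$ and Taylor-expands in $\varepsilon$, whereas you fix $s = \frac{1}{2}+it$ and apply L'H\^{o}pital in $\bar{r}$, which yields the identical computation.
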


\begin{proof}
We take $\aa \sim \bb \sim \infty$ and $s = r = 1/2 + it + \e$ with $\e > 0$ in the Maa\ss{}--Selberg relation to obtain
\[\int_{\Gamma_0(q) \backslash \Hb} \left|\Lambda^T E_{\infty}\left(z,\frac{1}{2} + it + \e,\chi\right)\right|^2 \, d\mu(z) = \frac{T^{2\e}}{2\e} - \left|\varphi_{\infty 1}\left(\frac{1}{2} + it + \e,\chi\right)\right|^2 \frac{T^{-2\e}}{2\e}.\]
The result then follows by taking the limit as $\e$ tends to zero and using the Taylor expansions
\begin{align*}
T^{2\e} & = 1 + 2\e \log T + O\left(\e^2\right),	\\
\varphi_{\infty 1}\left(\frac{1}{2} + it + \e,\chi\right) & = \varphi_{\infty 1}\left(\frac{1}{2} + it,\chi\right) + \e \varphi_{\infty 1}'\left(\frac{1}{2} + it,\chi\right) + O\left(\e^2\right),
\end{align*}
together with \eqref{scatteringunitary}.
\end{proof}

\begin{remark}
This proof of the Maa\ss{}--Selberg relation is via unfolding as in \cite[Section 4]{Arthur}, and makes use of the Arthur truncation $\Lambda^T E_{\aa}(z,s,\chi)$ of the Eisenstein series $E_{\aa}(z,s,\chi)$ given by \eqref{truncatedEisen}; cf.\ \cite[Section 1]{Arthur}. One can instead prove the Maa\ss{}--Selberg relation without recourse to the automorphy of the truncated Eisenstein series by only defining $\Lambda^T E_{\aa}(z,s,\chi)$ within a fundamental domain of $\Gamma_0(q) \backslash \Hb$. Let
\[\FF \supset \left\{z \in \Hb \colon 0 < \Re(z) < 1, \ \Im(z) \geq 1\right\}\]
be the usual fundamental domain of $\Gamma_0(q) \backslash \Hb$, and for each singular cusp $\aa$, we define the cuspidal zone
\[\FF_{\aa}(T) \defeq \left\{z \in \FF \colon 0 < \Re\left(\sigma_{\aa}^{-1} z\right) < 1, \ \Im\left(\sigma_{\aa}^{-1} z\right) \geq T\right\}\]
for $T \geq 1$; note that any two cuspidal zones will be disjoint provided that $T$ is sufficiently large. Then from \hyperref[ImzImgammaz]{Lemma \ref*{ImzImgammaz}}, we have that for $T \geq 1$,
\[\Lambda^T E_{\aa}\left(z, s, \chi\right) = \begin{dcases*}
E_{\aa}\left(z, s, \chi\right) & \hspace{-3cm} if $z \in \FF \setminus \bigcup_{\cc} \FF_{\cc}(T)$,	\\
E_{\aa}\left(z, s, \chi\right) - \sum_{\cc \in A} \left(\delta_{\aa\cc} \Im\left(\sigma_{\cc}^{-1} z\right)^s + \varphi_{\aa\cc}(s,\chi) \Im\left(\sigma_{\cc}^{-1} z\right)^{1 - s}\right) &	\\
& \hspace{-3cm} if $z \in \bigcap_{\cc \in A} \FF_{\cc}(T)$,
\end{dcases*}\]
where $A$ is any subset of the set of singular cusps. The Maa\ss{}--Selberg relation may then be proved using Green's theorem along the same lines as the proof of \cite[Proposition 6.8]{Iwaniec}.
\end{remark}

\section{Upper Bounds and Lower Bounds for the Integral \texorpdfstring{$\II$}{I}}

For $\eta \leq 1$, we consider the integral
\[\II = \II(\chi,\eta,T) \defeq \int_{\eta}^{\infty} \int_{0}^{1} \left|\Lambda^T E_{\infty}\left(z,\frac{1}{2},\chi\right)\right|^2 \, \frac{dx \, dy}{y^2}.\]
Our goal is to find upper and lower bounds for this integral: upper bounds via the Maa\ss{}--Selberg relation and lower bounds via Parseval's identity and the Brun--Titchmarsh inequality. Combining these bounds will yield lower bounds for $L(1,\chi)$.

\subsection{Upper Bounds for \texorpdfstring{$\II$}{I}}

\begin{proposition}
For $\eta \ll 1/q$ and $T \geq 1$, we have that
\[\II \ll \frac{\log q \log qT}{q \eta |L(1,\chi)|}.\]
\end{proposition}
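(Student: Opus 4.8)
The plan is to pass from the strip $\{0 < \Re(z) < 1,\ \Im(z) > \eta\}$ over which $\II$ is taken to the quotient $\Gamma_0(q) \backslash \Hb$, and then to invoke the Maa\ss{}--Selberg relation at $t = 0$. Since $|\Lambda^T E_{\infty}(z,1/2,\chi)|^2$ is $\Gamma_0(q)$-invariant by \eqref{truncatedautomorphy}, and the strip is a fundamental domain for $\Gamma_{\infty}$ acting on $\{\Im(z) > \eta\}$, the standard unfolding gives
\[\II = \int_{\Gamma_0(q) \backslash \Hb} \left|\Lambda^T E_{\infty}\left(z,\tfrac{1}{2},\chi\right)\right|^2 N(z,\eta) \, d\mu(z), \qquad N(z,\eta) \defeq \#\left\{\gamma \in \Gamma_{\infty} \backslash \Gamma_0(q) : \Im(\gamma z) > \eta\right\}.\]
First I would establish the pointwise bound $N(z,\eta) \ll 1/(q\eta)$, uniformly for $z \in \Hb$, whenever $\eta \ll 1/q$. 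Parametrising the cosets by the bottom row $(c,d)$ of $\gamma$ with $c \equiv 0 \pmod q$ and $\gcd(c,d) = 1$, the condition $\Im(\gamma z) > \eta$ reads $(cx + d)^2 + c^2 y^2 < y/\eta$, so $N(z,\eta)$ counts primitive points of the lattice $(q\Z) \times \Z$ inside an ellipse of area $\asymp 1/\eta$. The restriction $c \equiv 0 \pmod q$ is exactly what supplies the saving of a full factor $q$: summing the number of admissible $d$ over the admissible $c = qk$ yields $\ll 1/(q\eta)$ once $\Im(z) \geq \eta$, and this is why the final estimate carries $1/(q\eta)$ rather than $1/\eta$.

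The delicate point is uniformity when $\Im(z) < \eta$, i.e.\ for $z$ lying deep inside a cusp, where the crude ellipse count would give only $N(z,\eta) \ll 1/(q\sqrt{\eta\, \Im z})$, which can exceed $1/(q\eta)$. This overestimate is never attained. If $z$ is $\Gamma_0(q)$-equivalent to $\infty$, it has a representative with $\Im(z) \gg 1$, and the count above applies there. If instead $z$ lies near a cusp $\cc \not\sim \infty$ at depth $\Im(\sigma_{\cc}^{-1} z) = Y$, then its entire orbit has height $\ll 1/(v_{\cc} q Y) \leq 1/(qY)$, which drops below $\eta \asymp 1/q$ once $Y \gg 1$, forcing $N(z,\eta) = 0$; concretely, $\gcd(c,d) = 1$ together with $c \equiv 0 \pmod q$ prevents any $\gamma \in \Gamma_0(q)$ from lifting such a $z$ above height $\eta$. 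Verifying this dichotomy is the main obstacle, since it is precisely the inequivalence of the cusps that must be exploited, and it is where the hypothesis $\eta \ll 1/q$ enters. Granting it, $N(z,\eta) \ll 1/(q\eta)$ everywhere, whence
\[\II \ll \frac{1}{q\eta} \int_{\Gamma_0(q) \backslash \Hb} \left|\Lambda^T E_{\infty}\left(z,\tfrac{1}{2},\chi\right)\right|^2 d\mu(z).\]

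It then remains to bound the fundamental-domain integral. By the Maa\ss{}--Selberg corollary at $t = 0$ it equals $2\log T - \Re\bigl(\tfrac{\varphi_{\infty 1}'}{\varphi_{\infty 1}}(\tfrac{1}{2},\chi)\bigr)$, and differentiating \eqref{scatteringLambda} logarithmically, together with \eqref{Lambdadef}, gives
\[-\Re\left(\frac{\varphi_{\infty 1}'}{\varphi_{\infty 1}}\left(\tfrac{1}{2},\chi\right)\right) = \log q + 4\Re\left(\frac{\Lambda'}{\Lambda}(1,\chi)\right) = 3\log q + 4\Re\left(\frac{L'}{L}(1,\chi)\right) + O(1).\]
The one arithmetic input I would invoke is the effective bound $|L'(1,\chi)| \ll (\log q)^2$, obtained by partial summation from the P\'olya--Vinogradov inequality; it yields $|\tfrac{L'}{L}(1,\chi)| \ll (\log q)^2/|L(1,\chi)|$. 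Thus the fundamental-domain integral is $\ll \log T + (\log q)^2/|L(1,\chi)|$, and since $|L(1,\chi)| \ll \log q$ this is $\ll (\log q \log qT)/|L(1,\chi)|$. Combining with the displayed multiplicity bound produces $\II \ll (\log q \log qT)/(q\eta\, |L(1,\chi)|)$, as claimed. The whole argument is effective because each ingredient --- the Maa\ss{}--Selberg relation, the identity $|\varphi_{\infty 1}(1/2 + it,\chi)| = 1$ from \eqref{scatteringunitary}, and the bound on $L'(1,\chi)$ --- is.
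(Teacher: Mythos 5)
Your proof follows the paper's own argument step for step: the same unfolding of $\II$ against the multiplicity function $N(z,\eta)$, the same application of the Maa\ss{}--Selberg corollary at $t = 0$, the same logarithmic differentiation of \eqref{scatteringLambda} and \eqref{Lambdadef}, and the same effective inputs $|L(1,\chi)| \ll \log q$ and $|L'(1,\chi)| \ll (\log q)^2$. (Your constant $3\log q$ in the scattering term is correct, and in any case immaterial, since every term of size $O(\log q)$ is absorbed into $\log q \log qT/|L(1,\chi)|$.) The single point of divergence is the multiplicity bound $N(z,\eta) \ll 1/(q\eta)$: the paper simply cites \cite[Lemma 2.10]{Iwaniec}, which gives $N_q(z,\eta) < 1 + 10/(q\eta)$ uniformly in $z \in \Hb$, whereas you undertake to prove it yourself by counting primitive lattice points in an ellipse.

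In that counting argument there is a genuine gap: the dichotomy you use for points with $\Im(z) < \eta$ is not exhaustive. There exist orbits whose maximal height lies strictly between $\eta$ and $1$: for any $x \in \R$, the point $z_0 = x + 2i/q$ satisfies $|cz_0 + d|^2 \geq c^2 \Im(z_0)^2 \geq 4$ for every lower row $(c,d)$ with $q \mid c$, $c \neq 0$, so the maximal height in its $\Gamma_0(q)$-orbit is exactly $2/q$. That orbit contains points of height below $\eta$ (take $c = q$ and $d \equiv 1 \pmod{q}$ large), and such points lie in your ``delicate'' regime with $N \geq 1$; yet they have no representative with $\Im \gg 1$, and they are not forced to have $N = 0$, so neither branch of your dichotomy covers them. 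Fortunately the fix is one line and makes the delicate point disappear entirely: $N(\cdot,\eta)$ is constant on $\Gamma_0(q)$-orbits, so either $N(z,\eta) = 0$ or you may replace $z$ by an orbit point with $y = \Im(z) > \eta$, and for such a point your ellipse count gives
\[N(z,\eta) \leq 1 + \frac{1}{q\sqrt{y\eta}}\left(1 + 2\sqrt{\frac{y}{\eta}}\right) = 1 + \frac{1}{q\sqrt{y\eta}} + \frac{2}{q\eta} \leq 1 + \frac{3}{q\eta} \ll \frac{1}{q\eta},\]
using $y > \eta$ in the middle term and $\eta \ll 1/q$ to absorb the $1$. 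With this repair your proof is complete and, apart from re-proving Iwaniec's lemma rather than citing it, coincides with the paper's.
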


\begin{proof}
By folding the integral, one can write
\[\II = \int_{\Gamma_0(q) \backslash \Hb} N_q(z,\eta) \left|\Lambda^T E_{\infty}\left(z,\frac{1}{2},\chi\right)\right|^2 \, d\mu(z),\]
where for $\eta \leq 1$,
\[N_q(z,\eta) \defeq \# \left\{\gamma \in \Gamma_{\infty} \backslash \Gamma_0(q) \colon \Im(\gamma z) > \eta\right\}.\]
The Maa\ss{}--Selberg relation then implies the upper bound
\[\II \leq \sup_{z \in \Gamma_0(q) \backslash \Hb} N_q(z,\eta) \left(2\log T - \Re\left(\frac{\varphi_{\infty 1}'}{\varphi_{\infty 1}} \left(\frac{1}{2},\chi\right)\right)\right).\]
From \cite[Lemma 2.10]{Iwaniec}, we have the bound
\[N_q(z,\eta) < 1 +  \frac{10}{q \eta}.\]
By taking logarithmic derivatives of \eqref{scatteringLambda},
\[\frac{\varphi_{\infty 1}'}{\varphi_{\infty 1}}(s,\chi) = -\log q - 2\frac{\Lambda'}{\Lambda}(2 - 2s,\chi) - 2\frac{\Lambda'}{\Lambda}(2s,\overline{\chi}).\]
Taking logarithmic derivatives of \eqref{Lambdadef} and letting $s = 1/2$ then shows that
\[\frac{\varphi_{\infty 1}'}{\varphi_{\infty 1}}\left(\frac{1}{2},\chi\right) = -4\Re\left(\frac{L'}{L}(1,\chi)\right) - 2 \log q + \log 8\pi + \gamma_0 + (-1)^{\kappa} \frac{\pi}{2},\]
where $\gamma_0$ denotes the Euler--Mascheroni constant, and we have used the fact that
\[\frac{\Gamma'}{\Gamma}\left(\frac{1 + \kappa}{2}\right) = - \log 8 - \gamma_0 - (-1)^{\kappa} \frac{\pi}{2}.\]
So if $\eta \ll 1/q$,
\[\II \ll \frac{\left(|L(1,\chi)| \log qT + |L'(1,\chi)|\right)}{q \eta |L(1,\chi)|}.\]
The desired upper bound then follows from the bounds
\[|L(1,\chi)| \ll \log q, \qquad |L'(1,\chi)| \ll (\log q)^2,\]
which are both easily shown via partial summation. See, for example, \cite[Lemma 10.15]{Montgomery} for the former estimate; the latter follows by a similar argument.
\end{proof}

\subsection{Lower Bounds for \texorpdfstring{$\II$}{I}}

\begin{proposition}
If $T \geq 1$ and $\eta = 1/T$, we have the lower bound
\[\II \gg \frac{1}{q |L(1,\chi)|^2} \sum_{T \leq m \leq 2T} \left|\sigma_0(m,\chi)\right|^2.\]
\end{proposition}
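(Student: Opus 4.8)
The plan is to extract the lower bound from Parseval's identity in the $x$-variable, once the shape of $\Lambda^T E_{\infty}(z, \frac{1}{2}, \chi)$ on the strip $\{0 < x < 1,\ y > \eta\}$ defining $\II$ has been pinned down. As in Sarnak's treatment of $\zeta(1 + it)$, the governing principle is that Arthur truncation removes only constant terms, so the nonconstant Fourier coefficients of $\Lambda^T E_{\infty}$ coincide with those of $E_{\infty}$ already computed in \hyperref[Fouriercoeffsprop]{Proposition \ref*{Fouriercoeffsprop}}; since the integrand is nonnegative, these coefficients alone furnish a lower bound.

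The crucial first step is to show that on the strip with $y > \eta = 1/T$ one has
\[\Lambda^T E_{\infty}\left(z, \tfrac{1}{2}, \chi\right) = \begin{dcases*} E_{\infty}\left(z, \tfrac{1}{2}, \chi\right) & if $\eta < y < T$, \\ E_{\infty}\left(z, \tfrac{1}{2}, \chi\right) - y^{1/2} & if $y > T$. \end{dcases*}\]
To prove this I would inspect the terms subtracted in \eqref{truncatedEisen}. For a singular cusp $\cc$ and $\gamma \in \Gamma_{\cc} \backslash \Gamma_0(q)$, write $\sigma_{\cc}^{-1} \gamma \in \sigma_{\cc}^{-1} \Gamma_0(q) \sigma_{\infty}$ and apply \hyperref[ImzImgammaz]{Lemma \ref*{ImzImgammaz}} with $\bb \sim \infty$: every subtracted term except the one coming from the trivial coset at the cusp at infinity satisfies $\Im(\sigma_{\cc}^{-1} \gamma z) \leq 1/\Im(z) = 1/y < T$ as soon as $y > \eta$, so its defining constraint $\Im(\sigma_{\cc}^{-1} \gamma z) > T$ can never hold. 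The only surviving term is $c_{\infty\infty}(z, \frac{1}{2}, \chi)$, subtracted precisely when $y > T$, and it equals $y^{1/2}$ because $\varphi_{\infty\infty}(\frac{1}{2}, \chi) = 0$ for $q \geq 2$. This is the step I expect to be the main obstacle: it is exactly where the choice $\eta = 1/T$ enters, and one must verify that no cuspidal zone of another cusp protrudes into the strip above height $\eta$.

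Granting this, I would discard the nonnegative contribution of the range $y > T$, apply Parseval to $\int_{\eta}^{T}$, and drop the nonnegative $m = 0$ term to arrive at
\[\II \geq \sum_{m > 0} \left|\rho_{\infty}\left(m, \tfrac{1}{2}, \chi\right)\right|^2 \int_{\eta}^{T} \left|W_{\frac{\kappa}{2}, 0}(4\pi m y)\right|^2 \frac{dy}{y^2}.\]
Substituting $s = \frac{1}{2}$ into the formula of \hyperref[Fouriercoeffsprop]{Proposition \ref*{Fouriercoeffsprop}} and using $|\tau(\overline{\chi})| = \sqrt{q}$, $|L(1, \overline{\chi})| = |L(1, \chi)|$, and $\sigma_{1 - 2s}(m, \chi)|_{s = 1/2} = \sigma_0(m, \chi)$ yields $|\rho_{\infty}(m, \frac{1}{2}, \chi)|^2 \asymp_{\kappa} |\sigma_0(m, \chi)|^2 / (q |L(1, \chi)|^2 m)$, which already displays the required prefactor.

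It remains to bound the Whittaker integral from below for $m$ in the dyadic window $T \leq m \leq 2T$. A substitution $u \asymp m y$ turns $\int_{\eta}^{T} |W_{\frac{\kappa}{2}, 0}(4\pi m y)|^2 \frac{dy}{y^2}$ into $m$ times the integral of a fixed positive integrable function (a constant multiple of $K_0(u)^2/u$ when $\chi$ is even, of $e^{-u}/u$ when $\chi$ is odd) over an interval whose endpoints, for $T \leq m \leq 2T$ and $\eta = 1/T$, straddle a fixed compact subinterval of $(0, \infty)$ bounded away from $0$ and $\infty$; hence this integral is $\gg m$. Multiplying by the prefactor and retaining only $T \leq m \leq 2T$ gives
\[\II \gg \frac{1}{q |L(1, \chi)|^2} \sum_{T \leq m \leq 2T} \left|\sigma_0(m, \chi)\right|^2,\]
as claimed. (For the small values of $T$ near $1$ the substitution degenerates and needs separate handling, but the proposition is invoked only for large $T$.)
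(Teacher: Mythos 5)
Your proposal is correct and takes essentially the same route as the paper: Lemma \ref{ImzImgammaz} to show the truncation only removes the constant term for $y > 1/T$, then Parseval with the explicit coefficients of Proposition \ref{Fouriercoeffsprop}, the bound $|\tau(\chi)| = \sqrt{q}$, and the observation that the Bessel/exponential integral is $\gg 1$ (equivalently, $\gg m$ after your substitution) precisely in the dyadic window $T \leq m \leq 2T$. Your one deviation --- cutting the $y$-integration at height $T$, which degenerates for $T$ near $1$ --- is self-acknowledged and is removed by integrating over all $y > \eta$ as the paper does, which costs nothing since you have already shown that the nonconstant Fourier coefficients of $\Lambda^T E_{\infty}$ and $E_{\infty}$ agree on that whole region.
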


\begin{proof}
If $\eta = 1/T$, then \hyperref[ImzImgammaz]{Lemma \ref*{ImzImgammaz}} implies that
\[\Lambda^T E_{\infty}(z,s,\chi) = \begin{dcases*}
E_{\infty}(z,s,\chi) & if $1/T < \Im(z) \leq T$,	\\
E_{\infty}(z,s,\chi) - c_{\infty\infty}(z,s,\chi) & if $\Im(z) > T$.
\end{dcases*}\]
It follows that the nonzero Fourier coefficients of $\Lambda^T E_{\infty}(z,s,\chi)$ coincide with those of $E_{\infty}(z,s,\chi)$ for $\Im(z) > 1/T$. So by Parseval's identity, using the fact that $|\tau(\chi)| = \sqrt{q}$, and making the change of variables $y \mapsto y/|m|$ in the integral, we have that
\[\II \gg \begin{dcases*}
\frac{1}{q |L(1,\chi)|^2} \sum_{m = 1}^{\infty} \left|\sigma_0(m,\chi)\right|^2 \int_{m/T}^{\infty} \left|K_0(2\pi y)\right|^2 \, \frac{dy}{y} & if $\kappa = 0$,	\\
\frac{1}{q |L(1,\chi)|^2} \sum_{m = 1}^{\infty} \left|\sigma_0(m,\chi)\right|^2 \int_{m/T}^{\infty} e^{-4\pi y} \, \frac{dy}{y} & if $\kappa = 1$.
\end{dcases*}\]
If we simply consider the contribution of the positive integers $m$ for which $m/T \asymp 1$ --- say $T \leq m \leq 2T$ --- then we find that
\[\II \gg \frac{1}{q |L(1,\chi)|^2} \sum_{T \leq m \leq 2T} \left|\sigma_0(m,\chi)\right|^2,\]
as desired.
\end{proof}

Combining the upper and lower bounds for $\II$, we derive the following inequality for $L(1,\chi)$.

\begin{corollary}\label{L1chilowercor}
For all $T \geq q$, we have that
\[|L(1,\chi)| \gg \frac{1}{T (\log T)^2} \sum_{T \leq m \leq 2T} \left|\sigma_0(m,\chi)\right|^2.\]
\end{corollary}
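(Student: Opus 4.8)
The plan is to fix $\eta = 1/T$ and then eliminate $\II$ between the upper and lower bounds already established. The first thing I would check is that the two hypotheses are compatible: the lower bound was proved under the assumption $\eta = 1/T$, while the upper bound requires $\eta \ll 1/q$. Since we are assuming $T \geq q$, we have $1/T \leq 1/q$, so the choice $\eta = 1/T$ does satisfy $\eta \ll 1/q$, and both propositions apply simultaneously.

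With $\eta = 1/T$, the factor $1/(q\eta)$ in the upper bound becomes $T/q$, so the upper bound reads
\[\II \ll \frac{T \log q \, \log qT}{q |L(1,\chi)|},\]
while the lower bound is
\[\II \gg \frac{1}{q |L(1,\chi)|^2} \sum_{T \leq m \leq 2T} \left|\sigma_0(m,\chi)\right|^2.\]
Chaining these and cancelling the common factor $1/q$ gives
\[\frac{1}{|L(1,\chi)|^2} \sum_{T \leq m \leq 2T} \left|\sigma_0(m,\chi)\right|^2 \ll \frac{T \log q \, \log qT}{|L(1,\chi)|},\]
and multiplying through by $|L(1,\chi)|^2$ and rearranging yields
\[|L(1,\chi)| \gg \frac{1}{T \log q \, \log qT} \sum_{T \leq m \leq 2T} \left|\sigma_0(m,\chi)\right|^2.\]

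It then remains to simplify the logarithmic factor. Since $T \geq q \geq 2$, we have $\log q \leq \log T$ and $\log qT = \log q + \log T \leq 2 \log T$, whence $\log q \, \log qT \leq 2 (\log T)^2$; replacing $\log q \, \log qT$ by this larger quantity only weakens the denominator and so preserves the inequality, giving the claimed bound. There is no genuine obstacle in this argument, which is essentially the bookkeeping of combining the two preceding propositions; the sole point that must be verified is that the constraint $\eta \ll 1/q$ needed for the upper bound is consistent with the choice $\eta = 1/T$ forced by the lower bound, and this is exactly what the hypothesis $T \geq q$ secures.
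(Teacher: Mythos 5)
Your proof is correct and is precisely the argument the paper intends: the corollary is obtained by taking $\eta = 1/T$, noting that $T \geq q$ makes this compatible with the hypothesis $\eta \ll 1/q$ of the upper-bound proposition, chaining the two bounds for $\II$, and absorbing $\log q \, \log qT \leq 2(\log T)^2$ into the implicit constant. Nothing is missing.
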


So to obtain lower bounds for $|L(1,\chi)|$, we must find lower bounds for
\begin{equation}\label{sievebound}
\sum_{T \leq m \leq 2T} \left|\sigma_0(m,\chi)\right|^2.
\end{equation}

\subsection{Sieve Methods}

For quadratic characters, lower bounds for \eqref{sievebound} follow by restricting the sum to perfect squares.

\begin{lemma}\label{sigmaquad}
If $\chi$ is a quadratic character, then
\[\sum_{T \leq m \leq 2T} \left|\sigma_0(m,\chi)\right|^2 \geq (\sqrt{2} - 1) \sqrt{T}.\]
\end{lemma}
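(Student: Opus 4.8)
The plan is to keep only the perfect-square terms in the sum and discard everything else, exploiting the fact that for a \emph{real} character the divisor sum $\sigma_0(m,\chi)$ is always nonnegative and is at least $1$ when $m$ is a square. Reindexing the divisors gives $\sigma_0(m,\chi) = \sum_{d \mid m} \chi(d)$, which is a multiplicative function of $m$ because $\chi$ is; thus its value is determined by its values on prime powers. For a prime $p$ and exponent $a \geq 0$ one has $\sigma_0(p^a,\chi) = \sum_{j=0}^{a} \chi(p)^j$, which equals $a+1$ when $\chi(p) = 1$, equals $1$ when $\chi(p) = 0$, and equals $1$ or $0$ when $\chi(p) = -1$ according as $a$ is even or odd.

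First I would record the consequence that if $m = n^2$ is a perfect square, then every exponent in its factorisation is even, so each of the local factors above is at least $1$; multiplying them shows $\sigma_0(n^2,\chi) \geq 1$ for all $n$. This is the sole arithmetic input, and it is exactly here that the hypothesis that $\chi$ is quadratic (hence real-valued) is indispensable: for a genuinely complex $\chi$ the partial sums $\sum_j \chi(p)^j$ are complex and carry no such lower bound, which is why the quadratic and complex cases must be treated by different means.

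With this in hand, the estimate is immediate: throwing away the non-square terms gives
\[\sum_{T \leq m \leq 2T} |\sigma_0(m,\chi)|^2 \geq \sum_{\sqrt{T} \leq n \leq \sqrt{2T}} |\sigma_0(n^2,\chi)|^2 \geq \#\{n \in \Z : \sqrt{T} \leq n \leq \sqrt{2T}\},\]
and the number of integers in $[\sqrt{T},\sqrt{2T}]$ is controlled by the length $\sqrt{2T} - \sqrt{T} = (\sqrt{2}-1)\sqrt{T}$ of that interval. I do not expect any real difficulty here; the one point to watch is the lattice-point count at the endpoints, since a closed interval of length $L$ is only guaranteed to contain $\lfloor L \rfloor$ integers. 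This $O(1)$ endpoint discrepancy is immaterial in the eventual application, where $T$ is taken to be a large power of $q$, so the clean constant $(\sqrt{2}-1)$ is recovered in the relevant regime; if an inequality valid for every $T$ is desired one simply phrases the final bound in terms of the interval length.
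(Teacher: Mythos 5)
Your proof is correct and takes exactly the paper's route: restrict the sum to perfect squares, use $\sigma_0(n^2,\chi) \geq 1$ (the paper asserts this without the local computation at prime powers that you supply), and count the integers in $[\sqrt{T},\sqrt{2T}]$. Your caveat about the endpoint count is in fact warranted --- the paper silently uses the interval length $(\sqrt{2}-1)\sqrt{T}$ as a lower bound for the number of squares in $[T,2T]$, which is only true up to an $O(1)$ discrepancy --- but, as you say, this is harmless in the application, where $T \geq q$ is large and only the order of magnitude matters.
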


\begin{proof}
We restrict the sum over $m$ to perfect squares and use the fact that $\sigma_0(m,\chi) \geq 1$ whenever $m$ is a perfect square in order to find that
\[\sum_{T \leq m \leq 2T} \left|\sigma_0(m,\chi)\right|^2 \geq \sum_{T \leq m^2 \leq 2T} \left|\sigma_0(m^2,\chi)\right|^2 \geq (\sqrt{2} - 1) \sqrt{T}.\qedhere\]
\end{proof}

For complex characters, we instead restrict the sum in \eqref{sievebound} to primes and use the Brun--Titchmarsh inequality to show that there are sufficiently many primes for which $\overline{\chi}(p)$ is not close to $-1$, so that $|\sigma_0(p,\chi)|^2$ is not small. This is a result of Balasubramanian and Ramachandra \cite[Lemma 4]{Balasubramanian}, who combine it with an identity of Ramanujan together with a complex analytic argument to obtain lower bounds for $L(1 + it,\chi)$, and consequently derive zero-free regions for $L(s,\chi)$. We reproduce a proof of this result here for the sake of completeness.

\begin{lemma}[{Balasubramanian--Ramachandra \cite[Lemma 4]{Balasubramanian}}]\label{sigmacomplex}
There exists a large constant $K \geq 2$ such that for all complex characters $\chi$ modulo $q$ with $q \geq 2$ and for $T = q^K$,
\[\sum_{T \leq m \leq 2T} \left|\sigma_0(m,\chi)\right|^2 \gg_K \frac{T}{\log T}.\]
\end{lemma}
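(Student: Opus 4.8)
The plan is to retain only the prime values of $m$, which is legitimate since every summand $|\sigma_0(m,\chi)|^2$ is nonnegative, and to exploit that $\sigma_0(p,\chi) = \chi(1) + \chi(p) = 1 + \chi(p)$. For a prime $p \nmid q$ we have $|1+\chi(p)|^2 = 2 + 2\Re\chi(p)$, so the only primes failing to contribute a fixed positive amount are those with $\chi(p)$ close to $-1$. Fix a small constant $\delta \in (0,1)$ and call a prime $p \nmid q$ \emph{good} if $\Re\chi(p) \geq -1 + \delta$; each good prime then contributes $|1+\chi(p)|^2 \geq 2\delta$, so it suffices to exhibit $\gg_K T/\log T$ good primes in $(T,2T]$.

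I would count the good primes as the total number of primes in $(T,2T]$ minus the \emph{bad} primes, those with $\Re\chi(p) < -1 + \delta$. The bad primes lie in the residue classes $a \pmod q$ for which $\chi(a)$ falls in the short arc of the unit circle of angular half-width $\arccos(1-\delta)$ about $-1$. Writing $h \defeq \mathrm{ord}(\chi)$ and using that $\chi$ is exactly $\varphi(q)/h$-to-one onto the group of $h$-th roots of unity, the number of bad classes equals $\frac{\varphi(q)}{h}N$, where $N$ counts the $h$-th roots of unity in that arc. Counting equally spaced points in an arc gives $N \leq 1 + \frac{h}{\pi}\arccos(1-\delta)$, so the proportion $\beta \defeq N/h$ of bad classes satisfies $\beta \leq \frac1h + \frac1\pi\arccos(1-\delta)$. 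Since $\chi$ is complex we have $h \geq 3$, and a short computation shows that for $\delta$ fixed sufficiently small one has $\beta \leq 1/4$ uniformly, the extremal case being $h = 4$, where the arc captures only the value $-1$ and $\beta = 1/4$.

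Applying the Brun--Titchmarsh inequality in the interval form $\#\{T < p \leq 2T : p \equiv a \pmod q\} \leq \frac{2T}{\varphi(q)\log(T/q)}$ (valid as $T = q^K > q$) to each of the $\beta\varphi(q)$ bad classes bounds the number of bad primes by $\frac{2\beta T}{\log(T/q)} = \frac{2\beta K}{K-1}\cdot\frac{T}{\log T}$, since $\log(T/q) = \frac{K-1}{K}\log T$. As $\pi(2T) - \pi(T) = (1 + o(1))T/\log T$ by the prime number theorem, and the $O(\log q)$ primes dividing $q$ are negligible, the number of good primes is at least $\bigl(1 - \frac{2\beta K}{K-1} - o(1)\bigr)\frac{T}{\log T}$. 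With $\beta \leq 1/4$ this coefficient is positive for every $K > 2$, so fixing $K$ to be a sufficiently large constant produces $\gg_K T/\log T$ good primes; multiplying by $2\delta$ gives the claim.

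The crux --- and the step I expect to be the main obstacle --- is the uniform bound $\beta < 1/2$ for complex characters, which is exactly what the Brun--Titchmarsh constant $2$ can absorb: the bad-prime count $\frac{2\beta K}{K-1}\frac{T}{\log T}$ falls below the total $\sim T/\log T$ precisely when $\beta < \frac{K-1}{2K}$, a threshold tending to $\frac12$ as $K \to \infty$. A complex character has order at least $3$, forcing $\beta \leq 1/4$, safely under $\frac12$; a quadratic character, however, takes the value $-1$ on exactly half of all residue classes, so $\beta = 1/2$ and the argument fails at the margin. This is the structural reason quadratic characters are instead handled by perfect squares in \hyperref[sigmaquad]{Lemma~\ref*{sigmaquad}}, and it reflects the link between the Brun--Titchmarsh constant and the nonexistence of Siegel zeroes noted in the introduction.
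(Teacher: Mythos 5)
Your proposal is correct and takes essentially the same approach as the paper: restrict the sum to primes, split the reduced residue classes according to whether $\chi(a)$ lies near $-1$, bound the primes in the bad classes via Brun--Titchmarsh, and use the fact that a complex character has order at least $3$ to keep the proportion of bad classes strictly below the critical threshold $\tfrac12$ that the Brun--Titchmarsh constant $2$ can absorb. The paper differs only in inessential details: it chooses the threshold adaptively in terms of the order $Q$ (via an integer parameter $M$) rather than a fixed small $\delta$, counts the bad classes exactly rather than through an arc-counting bound, and uses an elementary prime-counting estimate (Diamond--Erd\H{o}s) in place of the prime number theorem.
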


\begin{proof}
We restrict the sum over $m$ to primes $p$ in order to find that
\begin{align*}
\sum_{T \leq m \leq 2T} \left|\sigma_0(m,\chi)\right|^2 & \geq \sum_{T \leq p \leq 2T} \left|1 + \chi(p)\right|^2	\\
& = 2 \sum_{a \in (\Z/q\Z)^{\times}} (1 + \Re(\chi(a))) (\pi(2T;q,a) - \pi(T;q,a)),
\end{align*}
where
\[\pi(x;q,a) \defeq \#\left\{p \leq x \colon p \equiv a \hspace{-.2cm} \pmod{q}\right\}.\]
Let $Q$ be the order of the Dirichlet character $\chi$; this divides $\varphi(q)$, and as $\chi$ is complex, $Q \geq 3$. For any integer $M$ between $0$ and $\lfloor Q/2 \rfloor$, we have that
\begin{multline*}
\sum_{T \leq m \leq 2T} \left|\sigma_0(m,\chi)\right|^2 \geq 2 \left(1 + \cos \frac{2\pi M}{Q}\right) (\pi(2T) - \pi(T))	\\
- 2 \left(1 + \cos \frac{2\pi M}{Q}\right) \sum_{\substack{a \in (\Z/q\Z)^{\times} \\ \Re(\chi(a)) < \cos \frac{2\pi M}{Q}}} (\pi(2T;q,a) - \pi(T;q,a)).
\end{multline*}

For the former sum, we have that for fixed $\delta > 0$ to be chosen,
\[\pi(2T) - \pi(T) \geq (1 - \delta) \frac{T}{\log T}\]
for all sufficiently large $T$ dependent on $\delta$. See, for example, \cite{Diamond}; in particular, this does not require the full strength of the prime number theorem.

For the latter sum, we first observe that there are $\varphi(q)/Q$ reduced residue classes $a$ modulo $Q$ for which $\chi(a) = e^{2\pi i m/Q}$ for each integer $m$ between $0$ and $Q - 1$, and so the number of reduced residue classes modulo $q$ for which $\Re(\chi(a)) < \cos \frac{2\pi M}{Q}$ is
\[\frac{\varphi(q)}{Q} \#\{M < m < Q - M\} = \varphi(q)\frac{Q - 2M - 1}{Q}.\]
To find an upper bound for $\pi(2T;q,a) - \pi(T;q,a)$, we use the Brun--Titchmarsh inequality, which states that for $(q,a) = 1$, $x \geq 2$, and $y \geq 2q$,
\[\pi(x + y;q,a) - \pi(x;q,a) \leq \frac{2y}{\varphi(q) \log y/q}\left(1 + \frac{8}{\log y/q}\right).\]
We take $x = y = T$, assuming that $T \geq 2q$, in order to obtain
\[\sum_{\substack{a \in (\Z/q\Z)^{\times} \\ \Re(\chi(a)) < \cos \frac{2\pi M}{Q}}} (\pi(2T;q,a) - \pi(T;q,a)) \leq \frac{2(Q - 2M - 1)}{Q} \frac{T}{\log T/q} \left(1 + \frac{8}{\log T/q}\right).\]
We take $T = q^K$ with $K \geq 2$ sufficiently large and dependent on $\delta$ but not on $q$, such that
\[\frac{1}{\log T/q} \left(1 + \frac{8}{\log T/q}\right) \leq (1 + \delta) \frac{1}{\log T}.\]

Combined, these estimates imply that for $T = q^K$ with $K \geq 2$ a sufficiently large constant,
\[\sum_{T \leq m \leq 2T} \left|\sigma_0(m,\chi)\right|^2 \geq 2 (1 - \cos \pi X) \left(1 - \delta - 2(1 + \delta) X + \frac{2(1 + \delta)}{Q}\right) \frac{T}{\log T}\]
for
\[X = \frac{Q - 2M}{Q}.\]
For $Q \geq 3$, we may choose
\[\delta = \frac{1}{10}, \qquad M = \left\lfloor \frac{1 + 4\delta}{2(1 + \delta)} \frac{Q}{2} + \frac{1}{2} \right\rfloor,\]
so that
\[X = \frac{1 - 2\delta}{2(1 + \delta)} - \frac{1}{Q} + \frac{2}{Q}\left\{\frac{1 + 4\delta}{2(1 + \delta)} \frac{Q}{2} + \frac{1}{2}\right\},\]
and hence
\[1 - \delta - 2(1 + \delta) X + \frac{2(1 + \delta)}{Q} = \delta + \frac{4(1 + \delta)}{Q} \left(1 - \left\{\frac{1 + 4\delta}{2(1 + \delta)} \frac{Q}{2} + \frac{1}{2}\right\}\right) \geq \delta.\]
Moreover, the fact that $\delta = 1/10$ and $Q \geq 3$ implies that $1 \leq M \leq \lfloor Q/2 \rfloor$ and $1/33 \leq X \leq 23/33$. So
\[\sum_{T \leq m \leq 2T} \left|\sigma_0(m,\chi)\right|^2 \gg_K \frac{T}{\log T}.\qedhere\]
\end{proof}

\begin{remark}\label{LandauSiegelremark}
If $\chi$ is quadratic, so that the order of $\chi$ is $Q = 2$, then
\[\sum_{T \leq m \leq 2T} \left|\sigma_0(m,\chi)\right|^2 \geq 2 (\pi(2T) - \pi(T)) - 2 \sum_{\substack{a \in (\Z/q\Z)^{\times} \\ \chi(a) = -1}} (\pi(2T;q,a) - \pi(T;q,a)).\]
The Brun--Titchmarsh inequality is insufficient to show that the first term on the right-hand side dominates the second term; in its place, we would require a strengthening of the Brun--Titchmarsh inequality of the form
\begin{equation}\label{strongBrTeq}
\pi(x + y;q,a) - \pi(x;q,a) \leq \frac{(2 - \delta)y}{\varphi(q) \log y/q}\left(1 + o(1)\right)
\end{equation}
for some $\delta > 0$. With this in hand, we would then be able to show that
\[\sum_{T \leq m \leq 2T} \left|\sigma_0(m,\chi)\right|^2 \gg \frac{T}{\log T},\]
so that
\[L(1,\chi) \gg \frac{1}{(\log q)^3},\]
which would imply the nonexistence of a Landau--Siegel zero for $L(1,\chi)$. Of course, the fact that the strengthened Brun--Titchmarsh inequality \eqref{strongBrTeq} implies (and is in fact equivalent to) the nonexistence of Landau--Siegel zeroes is well-known.
\end{remark}

\section{Proof of \texorpdfstring{\hyperref[mainthm]{Theorem \ref*{mainthm}}}{Theorem \ref{mainthm}}}
 
With these upper and lower bounds established, we are in a position to prove \hyperref[mainthm]{Theorem \ref*{mainthm}}.

\begin{proof}[Proof of \texorpdfstring{\hyperref[mainthm]{Theorem \ref*{mainthm}}}{Theorem \ref{mainthm}}]
If $\chi$ is quadratic, we have via \hyperref[L1chilowercor]{Corollary \ref*{L1chilowercor}} and \hyperref[sigmaquad]{Lemma \ref*{sigmaquad}} that for $T \geq q$,
\[L(1,\chi) \gg \frac{1}{\sqrt{T}(\log T)^2},\]
and so taking $T = q$ yields the desired lower bound.

If $\chi$ is complex, we have via \hyperref[L1chilowercor]{Corollary \ref*{L1chilowercor}} and \hyperref[sigmacomplex]{Lemma \ref*{sigmacomplex}} that for $T = q^K$,
\[|L(1,\chi)| \gg_K \frac{1}{(\log T)^3} \gg_K \frac{1}{(\log q)^3},\]
as desired.
\end{proof}

\end{document}